\pgfplotsset{compat=1.14}
\newcommand{\N}{\mathbb{N}}
\newcommand{\G}{\mathcal{G}}
\newcommand{\GG}{\mathcal{G}}
\newtheorem{teo}{Theorem}[section]
\newtheorem{lema}[teo]{Lemma}
\newtheorem{prop}[teo]{Proposition}
\theoremstyle{definition}
\newtheorem{defin}[teo]{Definition}
\newtheorem{exe}[teo]{Example}
\newtheorem{remark}[teo]{Remark}
\begin{document}

\title{Ultragraph shift spaces and chaos}

\author{
\small{Daniel Gon\c{c}alves}\\
\footnotesize{UFSC -- Departmento de Matem\'atica}\\
\footnotesize{88040-900 Florian\'{o}polis - SC, Brazil}\\
\footnotesize{\texttt{daemig@gmail.com}}
\and
\small{Bruno Brogni Uggioni}\\
\footnotesize{IFRS -- Campus Canoas}\\
\footnotesize{92412-240 Canoas - RS, Brazil}, \\
\footnotesize{\texttt{bruno.uggioni@canoas.ifrs.edu.br}}}

\date{}

\maketitle

\begin{abstract} Motivated by C*-algebra theory, ultragraph edge shift spaces generalize shifts of finite type to the infinite alphabet case. In this paper we study several notions of chaos for ultragraph shift spaces. More specifically, we show that Li-Yorke, Devaney and distributional chaos are equivalent conditions for ultragraph shift spaces, and characterize this condition in terms of a combinatorial property of the underlying ultragraph. Furthermore, we prove that such properties imply the existence of a compact, perfect set which is distributionally scrambled of type 1 in the ultragraph shift space (a result that is not known for a labelled edge shift (with the product topology) of an infinite graph). 
\end{abstract}

\vspace{1.0pc}
MSC 2010: 37B10, 37B20, 37D40, 54H20, 

\vspace{1.0pc}
Keywords: Symbolic dynamics, Chaos, Distributional chaos, Ultragraph shift spaces, Infinite alphabet.

\section{Introduction}

There are several notions of chaos in the mathematical literature, as one can see for instance in \cite{Kolyada}, where the author presents a brief survey of the concepts of chaos and relates them with topological properties of the associated systems. Informally, one can say that the basic idea present in many approaches is the following: there exists chaos when one can not predict the behavior of many trajectories of a given system, even in the case when it is possible to intuit the location of some points of the trajectory. Historically speaking, one of the first definition of chaos in a dynamical system was given by Li and Yorke in \cite{Li-Yorke}. Nowadays a dynamical system is called Li-Yorke chaotic if it possesses an uncountable scrambled set. After Li-Yorke chaos, Schweizer and Sm\'{i}tal introduced distributional chaos in the context of continuous maps of the interval, see \cite{SS}, and later this definition was split into three versions of distributional chaos (briefly, DC1, DC2, and DC3), see \cite{SS1}. Other types of chaos include Devaney chaos, dense chaos and generic chaos, see \cite{Oprocha}. The study of chaos range from the measurable setting (see for example \cite{Downarowicz}) to the topological one, and from specific dynamical systems to more general classes. In particular, while for some classes of dynamical systems the notions of chaos may coincide, for other classes the definitions are not equivalent (for example, equivalence of different kinds of chaos is not valid for general compact metric spaces, see \cite{Oprocha}, neither for general shift spaces, see \cite{oprocha}). Therefore, the study of chaos for specific dynamical systems is of great relevance. 

As with the theory of chaos, there are multiple useful notions of shift spaces when the symbol set is infinite. While the most common approach is to look at the symbol set with the discrete topology, and take the full shift as the product space with the product topology, this approach is not suitable, for example, when dealing with C*-algebras. In fact, in connection with C*-algebras, Ott-Tomforde-Willis propose an approach to infinite alphabet shift spaces in \cite{OTW}, and several aspects of the theory are developed in \cite{GR, GRultra, GSS, GSS0, GSS1}. Deepening the connection with C*-algebras, and building on work of Webster (see \cite{Webster}), a new generalization of shifts of finite type to the infinite alphabet case is proposed in \cite{GRultrapartial} (see \cite{CG} for further connections with C*-algebras and \cite{GSCSC} for a Curtis-Hedlund-Lyndon type theorem). The definition in \cite{GRultrapartial} relies on ultragraphs and the resulting shift space contains a countable basis of clopen subsets (which for ultragraphs that satisfy a mild condition turn to be compact-open subsets). 

Previously, see \cite{brunodaniel}, we have studied Li-Yorke chaos associated to the ultragraph shift spaces defined in \cite{GRultrapartial}. In particular we have showed that Li-Yorke chaoticity is linked to the existence of a vertex in the ultragraph that is the base of two distinct closed paths. In such case, we were able to extract a compact, perfect,  uncountable scrambled set (we remark that in the context of shift spaces over infinite alphabets with the product topology Li-Yorke chaoticity was studied in \cite{Raines}, but the uncountable scrambled set obtained there is not necessarily compact). 

In this paper we show that the 'combinatorial' condition that characterizes Li-Yorke chaos for ultragraph shift spaces also characterizes distributional chaos and the existence of a uncountable, closed, shift invariant subset that is Devaney chaotic. Furthermore, we prove that the distributional uncountable scrambled set can be chosen to be compact and perfect, but this set is not the same as the one we built in \cite{brunodaniel} (which we show is not distributional chaotic). In particular, our results show that ultragraph edge shift spaces behave like cocyclic shifts (which generalize sofic shifts), as the equivalence between Li-Yorke, Devaney, and DCi chaos, in the context of finite alphabet cocyclic shift spaces, was proved in \cite{oprocha}. 

 Another aspect that is fundamental in the study of chaos is its relation with entropy, see \cite{oprocha} and \cite{SS} for example. For countable state Markov shifts there is again more than one concept of entropy in the literature, see \cite{Mlind} for an overview. For cocyclic shifts over finite alphabets Oprocha and Wilczy\'nski show that chaos is equivalent to strictly positive entropy, see \cite{oprocha}. Motivated by this connection we propose a definition of entropy, see Definition~\ref{entropy}, for ultragraph shift spaces and note that it behaves well in relation with chaos: as with finite alphabets, a chaotic system is one with strictly positive entropy. A deeper study of our proposed notion of entropy is left for a follow up paper.

We organize the paper as follows. In Section~2 we set up basic notation and recall some relevant results from the literature regarding graphs and ultragraphs. We present the main results of the paper in Section~3. More precisely, we recall the three versions of distributional chaos; explain, in Proposition \ref{propdciset}, why the set built in \cite[Theorem~3.9]{brunodaniel} is not distributionally chaotic (although it is Li-Yorke chaotic);  and, in Proposition~\ref{propthebest}, we describe distributional chaoticity in ultragraph shift spaces in terms of the existence of closed paths based at some vertex (we also show in this proposition that the uncountable distributionally chaotic set can be chosen perfect and compact). We summarize our results regarding chaos in ultragraph shift spaces in Theorem~\ref{teoequivalent} and, to finalize, we present our proposed definition of entropy for ultragraph shifts (Definition~\ref{entropy}) and give an example of an ultragraph (which is not a graph) such that the associated shift space admits a distributionally chaotic pair but does not present distributional chaos. 


\section{Ultragraph shift spaces}

In this section we quickly review the construction of ultragraph shift spaces, as introduced in \cite{GRultrapartial}, and the associated metrics in these spaces, as defined in \cite{brunodaniel}.

\begin{defin}\label{def of ultragraph}\cite{T}
An \emph{ultragraph} is a quadruple $\mathcal{G}=(G^0, \mathcal{G}^1, r,s)$ consisting of two countable sets $G^0, \mathcal{G}^1$, a map $s:\mathcal{G}^1 \to G^0$, and a map $r:\mathcal{G}^1 \to P(G^0)\setminus \{\emptyset\}$, where $P(G^0)$ stands for the power set of $G^0$.
\end{defin}

\begin{defin}\label{def of mathcal{G}^0}
Let $\mathcal{G}$ be an ultragraph. Define $\mathcal{G}^0$ to be the smallest subset of $P(G^0)$ that contains $\{v\}$ for all $v\in G^0$, contains $r(e)$ for all $e\in \mathcal{G}^1$, and is closed under finite unions and nonempty finite intersections.
\end{defin}



To define ultragraph shift spaces we need to set up some notation. We follow closely the notation used in \cite{Marrero}.

Let $\mathcal{G}$ be an ultragraph. A \textit{finite path} in $\mathcal{G}$ is either an element of $\mathcal{G}%
^{0}$ or a sequence of edges $e_{1}\ldots e_{k}$ in $\mathcal{G}^{1}$ where
$s\left(  e_{i+1}\right)  \in r\left(  e_{i}\right)  $ for $1\leq i\leq k$. If we write $\alpha=e_{1}e_{2}\ldots e_{k}$, we say that $\alpha$ starts at the edge $e_{1}$ (or at the vertex $s(e_{1})$), passes by the edge $e_{k_{0}}$ (or by the vertex $s(e_{k_{0}})$) for some $1\leq k_0\leq k$, and finishes at the edge $e_{k}$ (or at the vertex $s(e_{k})$). We also define the length $\left|  \alpha\right|  $ of
$\alpha$ as $k$. The length $|A|$ of a path $A\in\mathcal{G}^{0}$ is
zero. We define $r\left(  \alpha\right)  =r\left(  e_{k}\right)  $ and
$s\left(  \alpha\right)  =s\left(  e_{1}\right)  $. For $A\in\mathcal{G}^{0}$,
we set $r\left(  A\right)  =A=s\left(  A\right)  $. The set of
finite paths in $\mathcal{G}$ is denoted by $\mathcal{G}^{\ast}$. An \textit{infinite path} in $\mathcal{G}$ is an infinite sequence of edges $\gamma=e_{1}e_{2}\ldots$ in $\prod \mathcal{G}^{1}$, where
$s\left(  e_{i+1}\right)  \in r\left(  e_{i}\right)  $ for all $i$. The set of
infinite paths  in $\mathcal{G}$ is denoted by $\mathfrak
{p}^{\infty}$. The length $\left|  \gamma\right|  $ of $\gamma\in\mathfrak
{p}^{\infty}$ is defined to be $\infty$. A vertex $v$ in $\mathcal{G}$ is
called a \emph{sink} if $\left|  s^{-1}\left(  v\right)  \right|  =0$ and is
called an \emph{infinite emitter} if $\left|  s^{-1}\left(  v\right)  \right|
=\infty$. 

For $n\geq1,$ we define
$\mathfrak{p}^{n}:=\{\left(  \alpha,A\right)  :\alpha\in\mathcal{G}^{\ast
},\left\vert \alpha\right\vert =n,$ $A\in\mathcal{G}^{0},A\subseteq r\left(
\alpha\right)  \}$. We specify that $\left(  \alpha,A\right)  =(\beta,B)$ if
and only if $\alpha=\beta$ and $A=B$. We set $\mathfrak{p}^{0}:=\mathcal{G}%
^{0}$ and we let $\mathfrak{p}:=\coprod\limits_{n\geq0}\mathfrak{p}^{n}$. We embed the set of finite paths $\GG^*$ in $\mathfrak{p}$ by sending $\alpha$ to $(\alpha, r(\alpha))$. We
define the length of a pair $\left(  \alpha,A\right)  $, $\left\vert \left(
\alpha,A\right)  \right\vert $, to be the length of $\alpha$, $\left\vert
\alpha\right\vert $. We call $\mathfrak{p}$ the \emph{ultrapath space}
associated with $\mathcal{G}$ and the elements of $\mathfrak{p}$ are called
\emph{ultrapaths} (or just paths when the context is clear). Each $A\in\mathcal{G}^{0}$ is regarded as an ultrapath of length zero and can be identified with the pair $(A,A)$. We may extend the range map $r$ and the source map $s$ to
$\mathfrak{p}$ by the formulas, $r\left(  \left(  \alpha,A\right)  \right)
=A$, $s\left(  \left(  \alpha,A\right)  \right)  =s\left(  \alpha\right)
$ and $r\left(  A\right)  =s\left(  A\right)  =A$.

We concatenate elements in $\mathfrak{p}$ in the following way: If $x=(\alpha,A)$ and $y=(\beta,B)$, with $|x|\geq 1, |y|\geq 1$, then $x\cdot y$ is defined if and only if
$s(\beta)\in A$, and in this case, $x\cdot y:=(\alpha\beta,B)$. Also we
specify that:
\begin{equation}
x\cdot y=\left\{
\begin{array}
[c]{ll}%
x\cap y & \text{if }x,y\in\mathcal{G}^{0}\text{ and if }x\cap y\neq\emptyset\\
y & \text{if }x\in\mathcal{G}^{0}\text{, }\left|  y\right|  \geq1\text{, and
if }x\cap s\left(  y\right)  \neq\emptyset\\
x_{y} & \text{if }y\in\mathcal{G}^{0}\text{, }\left|  x\right|  \geq1\text{,
and if }r\left(  x\right)  \cap y\neq\emptyset
\end{array}
\right.  \label{specify}%
\end{equation}
where, if $x=\left(  \alpha,A\right)  $, $\left|  \alpha\right|  \geq1$ and if
$y\in\mathcal{G}^{0}$, the expression $x_{y}$ is defined to be $\left(
\alpha,A\cap y\right)  $. Given $x,y\in\mathfrak{p}$, we say that $x$ has $y$ as an \emph{initial segment} if
$x=y\cdot x^{\prime}$, for some $x^{\prime}\in\mathfrak{p}$, with $s\left(
x^{\prime}\right)  \cap r\left(  y\right)  \neq\emptyset$. 

We extend the source map $s$ to $\mathfrak
{p}^{\infty}$, by defining $s(\gamma)=s\left(  e_{1}\right)  $, where
$\gamma=e_{1}e_{2}\ldots$. We may concatenate pairs in $\mathfrak{p}$, with
infinite paths in $\mathfrak{p}^{\infty}$ as follows. If $y=\left(
\alpha,A\right)  \in\mathfrak{p}$, and if $\gamma=e_{1}e_{2}\ldots\in
\mathfrak{p}^{\infty}$ are such that $s\left(  \gamma\right)  \in r\left(
y\right)  =A$, then the expression $y\cdot\gamma$ is defined to be
$\alpha\gamma=\alpha e_{1}e_{2}...\in\mathfrak{p}^{\infty}$. If $y=$
$A\in\mathcal{G}^{0}$, we define $y\cdot\gamma=A\cdot\gamma=\gamma$ whenever
$s\left(  \gamma\right)  \in A$. Of course $y\cdot\gamma$ is not defined if
$s\left(  \gamma\right)  \notin r\left(  y\right)  =A$. 


Since we are following the ideas in \cite{GRultrapartial} we must assume that our ultragraphs have no sinks. We make this assumption explicit below:

{\bf Throughout assumption:} From now on all ultragraphs in this paper are assumed to have no sinks.

\begin{defin}
\label{infinte emitter} For each subset $A$ of $G^{0}$, let
$\varepsilon\left(  A\right)  $ be the set $\{ e\in\mathcal{G}^{1}:s\left(
e\right)  \in A\}$. We shall say that a set $A$ in $\mathcal{G}^{0}$ is an
\emph{infinite emitter} whenever $\varepsilon\left(  A\right)  $ is infinite. We say that $A$ is a minimal infinite emitter if it is an infinite emitter that contains no proper subsets (in $\GG^0$) that are infinite emitters. For a finite path $\alpha$ in $\GG$, we say that $A$ is a minimal infinite emitter in $r(\alpha)$ if $A$ is a minimal infinite emitter and $A\subseteq r(\alpha)$. We denote the set of all minimal infinite emitters in $r(\alpha)$ by $M_\alpha$.
\end{defin}

As a set the shift space associated to an ultragraph $\G$ is defined as $X= \mathfrak{p}^{\infty} \cup X_{fin}$, where 
$$X_{fin} = \{(\alpha,A)\in \mathfrak{p}: |\alpha|\geq 1 \text{ and } A\in M_\alpha \}\cup
 \{(A,A)\in \GG^0: A \text{ is a minimal infinite emitter}\}.$$ 

In \cite{GRultrapartial} a topology with a basis of cylinder sets was defined for $X$, and in \cite{brunodaniel} the authors showed that this topology coincides with the topology given by a metric. This metric was obtained by listing the elements of $\mathfrak{p}$ as $\mathfrak{p} = \{ p_1, p_2, p_3, \ldots \}$, and then defining, for $x,y \in X$, 
\begin{equation}\label{definmetricanova}
d_X (x,y) := \begin{cases} 1/2^i & \text{$i \in \N$ is the smallest value such that $p_i$ is an initial} \\  & \text{ \ \ \ segment of one of $x$ or $y$ but not the other,} \\
0 & \text{if $x=y$}.
\end{cases}
\end{equation} 

\begin{remark}\label{ordem}
Notice that the metric $d_X$ depends on the order one chooses for $\mathfrak{p} = \{ p_1, p_2, p_3, \ldots \}$, but this does not interfere with our results regarding chaoticity.
\end{remark}

For our work the description of convergence of sequences in $X$ is important, so we recall it below.

\begin{prop}\label{convseq} Let $(x^n)_{n=1}^{\infty}$ be a sequence of elements in $X$, where $x^n = (\gamma^n_1\ldots \gamma^n_{k_n}, A_n)$ or $x^n = \gamma_1^n \gamma_2^n \ldots$, and let $x \in X$.
\begin{enumerate}[a)]
\item If $|x|= \infty$, say $x=\gamma_1 \gamma_2 \ldots$, then $\{x^n\}_{n=1}^{\infty}$ converges to $x$ if, and only if, for every $M\in \N$ there exists $N\in \N$ such that $n>N$ implies that $|x^n|\geq M$ and $\gamma^n_i= \gamma_i$ for all $1\leq i \leq M$.

\item If $|x|< \infty$, say $x=(\gamma_1 \ldots \gamma_k, A)$, then $\{x^n\}_{n=1}^{\infty}$ converges to $x$ if, and only if, for every finite subset $F\subseteq \varepsilon\left(  A\right)$ there exists $N\in \N$ such that $n > N$ implies that $x^n = x$ or $|x^n|> |x|$, $\gamma^n_{|x|+1} \in \ \varepsilon\left(  A\right)\setminus F$, and $\gamma^n_i = \gamma_i$ for all $1 \leq i \leq |x|$. 

\end{enumerate}
\end{prop}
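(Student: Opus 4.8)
The plan is to characterize the convergence $x^n \to x$ directly through the metric $d_X$, i.e., through which ultrapaths $p_i \in \mathfrak{p}$ are initial segments of $x$ versus of $x^n$. Recall that $d_X(x^n,x) \to 0$ iff for every $i$ there is $N$ such that for $n > N$, $p_i$ is an initial segment of $x^n$ if and only if it is an initial segment of $x$; equivalently, the ``smallest index at which $x^n$ and $x$ differ'' tends to infinity. So the core of the argument is to translate, for each fixed ultrapath $p_i$, the statement ``$p_i$ is an initial segment of $x$'' into the combinatorial data $(\gamma_1\gamma_2\ldots)$ or $(\gamma_1\ldots\gamma_k, A)$ describing $x$, and similarly for $x^n$, and then see what ``eventual agreement on all $p_i$ with small index'' forces.

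First I would handle case (a), $|x| = \infty$, say $x = \gamma_1\gamma_2\ldots$. For the forward direction, fix $M$; the finite path $\gamma_1\ldots\gamma_M$ (embedded as $(\gamma_1\ldots\gamma_M, r(\gamma_M))$) is some $p_i$, and it is an initial segment of $x$; convergence then forces $p_i$ to eventually be an initial segment of $x^n$, which by the definition of initial segment (using $s(x') \cap r(p_i) \neq \emptyset$) means $|x^n| \geq M$ and $\gamma^n_j = \gamma_j$ for $1 \leq j \leq M$. For the converse, given any $i$, I would note that whether $p_i$ is an initial segment of $x$ is determined by $\gamma_1 \ldots \gamma_{|p_i|}$ (and a range-intersection condition); choosing $M = |p_i| + 1$ and applying the hypothesis makes $x$ and $x^n$ share enough initial edges that $p_i$ is an initial segment of one iff of the other. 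Taking $N = \max$ over the finitely many $p_1,\ldots,p_i$ gives the convergence.

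The subtler part, and where I expect the main obstacle, is case (b), $|x| < \infty$, $x = (\gamma_1\ldots\gamma_k, A)$ with $A$ a minimal infinite emitter in $r(\gamma_k)$ (or $k=0$ and $A$ a minimal infinite emitter). The issue is that $A$ being an infinite emitter means $\varepsilon(A)$ is infinite, and the ultrapaths $p_i$ of the form $(\gamma_1\ldots\gamma_k e, A')$ with $e \in \varepsilon(A)$ — equivalently, the ``one-step extensions'' of $x$ — are cofinitely many distinct elements of $\mathfrak{p}$ of which $x$ is an initial segment of none (since $x$ has length exactly $k$ and these have length $k+1$); but $x$ itself, as $(\gamma_1\ldots\gamma_k,A)$, is its own initial segment. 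The key combinatorial point is: for $p_i = (\gamma_1\ldots\gamma_k e, A')$ to fail to be an initial segment of $x^n$ while it could be an initial segment of $x$-extensions, one needs $x^n$ either to equal $x$, or to have length $> k$ with $(k+1)$-st edge $\gamma^n_{k+1}$ lying outside the finite set $F$ of those $e$'s already ``used up'' by the small-index $p_i$'s — which is exactly the statement to be proved. So I would: (i) show convergence $\Rightarrow$ the stated condition, by, given $F \subseteq \varepsilon(A)$ finite, taking the indices of $x$ together with all $(\gamma_1\ldots\gamma_k e, r(e))$ for $e \in F$, letting $N$ be beyond all of them, and reading off that for $n > N$ either $x^n = x$ or $x^n$ agrees with $x$ on the first $k$ edges and emits an edge not in $F$ at step $k+1$ (here minimality of $A$ is used to rule out $x^n$ being a finite path with range a proper infinite-emitter subset, and to pin down $A_n$); and (ii) for the converse, given $i$, collect the finitely many edges $e$ such that $(\gamma_1\ldots\gamma_k e, \cdot)$ appears among $p_1,\ldots,p_i$ into a finite set $F$, apply the hypothesis to get $N$, and check that for $n > N$ every $p_j$ with $j \leq i$ is an initial segment of $x^n$ iff of $x$: either $x^n = x$ and there is nothing to prove, or $x^n$ differs from $x$ only past level $k$ and emits at level $k+1$ an edge avoiding $F$, so none of the relevant short extensions distinguishes them, while agreement on $\gamma_1,\ldots,\gamma_k$ handles the $p_j$ of length $\leq k$.

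The delicate bookkeeping is entirely in case (b): making sure that the finite set $F$ is chosen to absorb exactly the ``distinguishing'' ultrapaths of small index, and that minimality of the infinite emitter $A$ (together with the structure of $X_{fin}$) forces $A_n$ and the only possible finite-length competitor $x^n = x$ to behave as claimed. Once the dictionary between ``$p_i$ is an initial segment of $\cdot$'' and the edge data is set up carefully, both directions of both cases are routine $\varepsilon$–$N$ arguments.
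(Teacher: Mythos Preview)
The paper does not actually prove this proposition: it is introduced with ``we recall it below'' and no argument is given, the result having been established in the references \cite{GRultrapartial} and \cite{brunodaniel} where the topology and metric on $X$ were first set up. So there is no proof in the paper to compare your proposal against.

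Your approach---reducing $d_X(x^n,x)\to 0$ to eventual agreement of the initial-segment predicates for every $p_i\in\mathfrak{p}$, and then reading off the combinatorial content---is the natural direct argument and is essentially correct. Case (a) works exactly as you outline. In case (b) the forward implication is fine as written (the ultrapaths $x$ itself and $(\gamma_1\ldots\gamma_k e, r(e))$ for $e\in F$ do all the work, with minimality of $A$ forcing $A_n=A$ when $|x^n|=k$). For the converse in (b) your choice of $F$ needs a small enlargement: the claim that ``agreement on $\gamma_1,\ldots,\gamma_k$ handles the $p_j$ of length $\leq k$'' is not quite right for $p_j=(\gamma_1\ldots\gamma_k,B)$ with $A\not\subseteq B$, since such a $p_j$ is not an initial segment of $x$ but could still be one of $x^n$ whenever $s(\gamma^n_{k+1})\in A\cap B$. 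Minimality of $A$ rescues this immediately---$A\cap B\subsetneq A$ forces $\varepsilon(A\cap B)$ to be finite---so you just throw these finitely many edges into $F$ as well (and likewise the $(k{+}1)$-st edges of any $p_j$, $j\leq i$, of length $>k$ beginning with $\gamma_1\ldots\gamma_k$, not only those of length exactly $k{+}1$). With that adjustment the argument is complete.
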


Finally, attached to the space $X$ we have the shift map, as in \cite{GRultrapartial}:

\begin{defin}\label{shift-map-def}
The \emph{shift map} is the function $\sigma : X \rightarrow X$ defined by $$\sigma(x) =  \begin{cases} \gamma_2 \gamma_3 \ldots & \text{ if $x = \gamma_1 \gamma_2 \ldots \in \mathfrak{p}^\infty$} \\ (\gamma_2 \ldots \gamma_n,A) & \text{ if $x = (\gamma_1 \ldots \gamma_n,A) \in X_{fin}$ and $|x|> 1$} \\(A,A) & \text{ if $x = (\gamma_1,A) \in X_{fin}$} \\ (A,A) & \text{ if $x = (A,A)\in X_{fin}$.} 
\end{cases}$$
\end{defin}

The last notion we need to recall is the following.

\begin{defin}\label{m,n}
Let $\mathcal{G}$ be an ultragraph.
A \textit{closed path based at the vertex $v$} is a finite path $e_{1}e_{2}\ldots e_{k}$ such that $v = s(e_{1}) \in r(e_{k})$ and $s(e_{i}) \neq v$ for all $i > 1$. We denote by $CP_{\mathcal{G}}(v)$ the set of all closed paths in $\mathcal{G}$ based at $v$.
\end{defin}

\section{Chaos on ultragraph shift spaces}

We start this section recalling the three versions of distributional chaos (as found in \cite{roth}). Then, in Proposition~\ref{propthebest}, we show the equivalence between chaos in an ultragraph shift space and the existence of a vertex which is the base of two distinct closed paths. This leads us to our main result, Theorem \ref{teoequivalent}, where we relate all versions of chaos in the context of ultragraph shift spaces.    

\begin{defin}\label{defphi}
Let $X$ be an ultragraph, $\delta > 0$ be any real number and $n > 0$ any natural number. Then, we define the \emph{(n, $\delta$)-distribution function} as being
$$\Phi(n,\delta,x,y) := \dfrac{\#\{0 \leq k \leq n: d(\sigma^{k}(x),\sigma^{k}(y)) < \delta\}}{n}, $$
for all $x,y \in X$.
\end{defin}

With the distribution function above we can recall the definition of distributional chaos. We remark that although we are only working with ultragraph shift spaces, distributional chaos can be defined for any dynamical system over a metric space (with the same definition). For instance, in  \cite{SS},  Schweizer and Sm\'{i}tal introduced distributional chaos in the context of continuous maps of the interval, and later this definition was split into three versions of distributional chaos (briefly, DC1, DC2, and DC3), as we can see in \cite{SS1} and more recently in \cite{roth}.

\begin{defin}\label{defDCi}
Let $X$ be an ultragraph shift space  and $(x,y)$ a pair of points in $X$. The pair $(x,y)$ is called \textit{distributionally scrambled of type 1} (or a DC1 pair) if 
$$\displaystyle{\limsup_{n \rightarrow \infty} \Phi(n,\delta,x,y) = 1}, \mbox{\ \ for all $\delta > 0,$}$$
and
$$\displaystyle{\liminf_{n \rightarrow \infty} \Phi(n,\delta_{0},x,y) = 0}, \mbox{\ \ for some $\delta_{0} > 0$};$$
\textit{distributionally scrambled of type 2} (or a DC2 pair) if 
$$\displaystyle{\limsup_{n \rightarrow \infty} \Phi(n,\delta,x,y) = 1}, \mbox{\ \ for all $\delta > 0,$}$$
and
$$\displaystyle{\liminf_{n \rightarrow \infty} \Phi(n,\delta_{0},x,y) < 1}, \mbox{\ \ for some $\delta_{0} > 0$};$$
\textit{distributionally scrambled of type 3} (or a DC3 pair) if 
$$\displaystyle{\liminf_{n \rightarrow \infty} \Phi(n,\delta,x,y) < \limsup_{n \rightarrow \infty} \Phi(n,\delta,x,y)}, \mbox{\ \ for all $\delta$ in some interval $(a,b)$, where $0 \leq a < b$.}$$
Moreover, a subset $S$ of $X$ is \textit{distributionally scrambled of type i} (or a DC$i$ set), where $i=1,2,3$, if every pair of distinct elements in $S$ is a DC$i$ pair. Finally, the system $(X, \sigma)$ is \textit{distributionally chaotic of type i} (or a DC$i$ system), where $i = 1,2,3$, if there is a DC$i$ set $S \subseteq X$ which is uncountable. 
\end{defin}

Notice that the strongest among the above definitions is DC1 with an uncountable distributionally scrambled set. The idea behind a distributional pair (of type 1) is the following: when we look at trajectories of given points from one time perspective, then the frequency of iterations during which points are close to each other tends to 1, but when the time perspective is changed it seems that their iterations are separated from one another almost all the time.

Before we proceed let us point out some straightforward, but important, consequences of the definitions (\ref{defphi}) and (\ref{defDCi}) above. First,  if $x,y \in X$ is DC$i$ pair, then $x,y$ is a DC$(i+1)$ pair, for $i=1,2$. Also, if $\delta_{1} < \delta_{2} \leq diam X$, then $\Phi(n,\delta_{1},x,y) \leq \Phi(n,\delta_{2},x,y)$ for all natural $n$ and $x,y \in X$. Hence the expression ``for all $\delta > 0$'' in the definition of a DC1 pair can be replaced by ``for a non negative and decreasing sequence $\{\delta_{n}\}_{n \in \mathbb{N}}$ of real numbers''.

Next we prove a couple of auxiliary results (Lemma~\ref{prop0inf} and Proposition~\ref{prop01}), in order to establish some relations between the limits $\displaystyle{\lim_{n \rightarrow \infty} d(\sigma^{n}(x),\sigma^{n}(y))}$ and $\displaystyle{\lim_{n \rightarrow \infty} \Phi(n,\delta, x, y)}$. We also discuss these limits when we deal with \emph{eventually periodic} points (see Definition~\ref{defineventually}, Lemma~\ref{valeparaQQshift}, and Proposition~\ref{comousar}).

\begin{lema}\label{prop0inf}
Let $\mathcal{G}$ be an ultragraph, $X$ be the associated shift space, and $x,y \in X$ be infinite paths. If, for all edge $e$, $\#\{i \in \mathbb{N}: x_{i} = e\} < \infty$ and $\#\{i \in \mathbb{N}: y_{i} = e\} < \infty$, then $\displaystyle{\lim_{n \rightarrow \infty}d(\sigma^{n}(x),\sigma^{n}(y))=0}$.
\end{lema}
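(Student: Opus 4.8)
The plan is to show that, under the hypothesis that every edge appears only finitely often in both $x$ and $y$, the iterates $\sigma^n(x)$ and $\sigma^n(y)$ eventually agree on arbitrarily long initial blocks, so that $d(\sigma^n(x),\sigma^n(y)) \to 0$ by the metric's definition. The key point is that $d(\sigma^n(x),\sigma^n(y)) < 1/2^m$ holds as soon as each of the finitely many ultrapaths $p_1,\dots,p_m$ fails to distinguish $\sigma^n(x)$ from $\sigma^n(y)$ — that is, for each $i\le m$, either $p_i$ is an initial segment of both $\sigma^n(x)$ and $\sigma^n(y)$, or of neither. So it suffices to arrange, for each fixed $m$, that for all large $n$ none of $p_1,\dots,p_m$ separates the two shifted paths.

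First I would fix $m$ and examine the finitely many ultrapaths $p_1,\dots,p_m$. Each $p_i$ has finite length $\ell_i$, and being an initial segment of an infinite path $\gamma_1\gamma_2\cdots$ depends only on the first $\ell_i$ edges of that path (and, in the length-zero case, on $s(\gamma_1)$ lying in the vertex set $p_i$; in the positive-length case it means $p_i = (e_1\cdots e_{\ell_i}, A)$ with $A \supseteq r(e_{\ell_i})$ and $\gamma_1\cdots\gamma_{\ell_i} = e_1\cdots e_{\ell_i}$). Let $L = \max_i \ell_i$. The finitely-many-repetitions hypothesis gives a bound $N_0$ such that no edge occurring among the first $L$ edges of any $p_i$ appears in $x$ or in $y$ at any position beyond $N_0$; more simply, since each edge appears finitely often, there is an $N_0$ so that for $n > N_0$ the blocks $x_{n+1}\cdots x_{n+L}$ and $y_{n+1}\cdots y_{n+L}$ consist entirely of edges that are ``fresh'', i.e. do not equal any of the (finitely many) edges appearing within the first $L$ coordinates of any $p_1,\dots,p_m$. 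Then for $n > N_0$, $\sigma^n(x) = x_{n+1}x_{n+2}\cdots$ cannot have any positive-length $p_i$ as an initial segment, and likewise for $\sigma^n(y)$; the length-zero $p_i$'s (subsets of $G^0$) need a slightly separate look, but the same pigeonhole idea applies since a length-zero $p_i$ is an initial segment of $\gamma$ exactly when $s(\gamma_1)\in p_i$, and one can absorb finitely many such vertex conditions into an enlarged $N_0$ — or, cleanly, observe that once the positive-length obstructions are killed it suffices to additionally match the single vertex $s(\sigma^n(x)_1)$ versus $s(\sigma^n(y)_1)$ against the finitely many vertex-sets among the $p_i$, which again is controlled by finitely many coordinates. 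Hence for $n > N_0$, every $p_i$ with $i \le m$ is an initial segment of neither $\sigma^n(x)$ nor $\sigma^n(y)$, so $d(\sigma^n(x),\sigma^n(y)) \le 1/2^m$ (in fact $< 1/2^m$ unless some $p_i$ equals... but $\le 1/2^m$ suffices).

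Since $m$ was arbitrary, letting $m\to\infty$ after the choice of $N_0 = N_0(m)$ yields $\lim_{n\to\infty} d(\sigma^n(x),\sigma^n(y)) = 0$. The only mildly delicate point — the main obstacle, such as it is — is handling the length-zero ultrapaths and the exact bookkeeping of what ``$p_i$ is an initial segment of $\gamma$'' means coordinate-wise (using the definition of initial segment and the concatenation rules in \eqref{specify}); everything else is a straightforward pigeonhole argument exploiting that only finitely many $p_i$ and only finitely many edge-labels within them are relevant, against a pair of paths in which each label recurs only finitely often.
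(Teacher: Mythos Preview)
Your direct approach---fix $m$ and show that for all large $n$ none of $p_1,\dots,p_m$ separates $\sigma^n(x)$ from $\sigma^n(y)$---differs from the paper's, which argues by contrapositive: if $d(\sigma^n(x),\sigma^n(y))\nrightarrow 0$, then the minimal distinguishing index $j_n$ is bounded along a subsequence, so some fixed $p_j$ is an initial segment of infinitely many $\sigma^{n_k}(x)$ (or of $\sigma^{n_k}(y)$), and its first edge must repeat infinitely. For the positive-length $p_i$ your pigeonhole argument is correct and clean.

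The gap is in the length-zero case. You claim the vertex conditions ``$s(x_{n+1})\in A$'' can be absorbed into an enlarged $N_0$ by ``the same pigeonhole idea'', but the lemma's hypothesis bounds only how often each \emph{edge} recurs, not how often a given \emph{vertex} occurs as a source. If $v$ is an infinite emitter one can have $s(x_{n+1})=v$ for \emph{every} $n$ with all the edges $x_{n+1}$ distinct. Concretely: take two vertices $v,w$, each carrying countably many self-loops $e_1,e_2,\dots$ and $f_1,f_2,\dots$ respectively, and set $x=e_1e_2e_3\cdots$, $y=f_1f_2f_3\cdots$. Every edge occurs exactly once, yet under your own reading the ultrapath $\{v\}\in\mathfrak{p}^0$ is an initial segment of $\sigma^n(x)$ and never of $\sigma^n(y)$, so it separates them for all $n$; no enlargement of $N_0$ makes it an initial segment of \emph{neither}. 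Your alternative phrasing (match $s(\sigma^n(x)_1)$ and $s(\sigma^n(y)_1)$ against the finitely many vertex-sets) fails for the same reason: nothing in the edge hypothesis forces these source vertices ever to agree on membership in $\{v\}$. Note that the paper's contrapositive proof also glosses over this point---it speaks of ``the first coordinate'' of the recurring $p_j$, tacitly assuming $|p_j|\ge 1$---so the subtlety is present there as well; but your write-up makes the gap explicit by committing to the claim that every $p_i$ is eventually an initial segment of neither shifted path.
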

\begin{proof}
 Let $x,y$ be infinite paths. Then, for each natural $n$, there is $j_{n} \in \mathbb{N}$ such that $$\displaystyle{d(\sigma^{n}(x),\sigma^{n}(y)) = \dfrac{1}{2^{j_{n}}}},$$
where $p_{j_{n}} \in \mathfrak{p} = \{p_{1},p_{2},p_{3}, \ldots\}$ is the finite path of the definition of the metric. 

We prove the contrapositive of the proposition. Suppose that $\dfrac{1}{2^{j_{n}}}\nrightarrow 0$. Then $j_{n} \nrightarrow \infty$ and we can find an infinite set of indices $\{n_{i}: i \in \mathbb{N}\}$, all of them distinct, such that $j_{n_{k}} = j_{n_{\ell}}$, and hence $p_{j_{n_{k}}} = p_{j_{n_{\ell}}}$, for all $k \neq \ell$. As $p_{j_{n_{k}}}$ is the initial segment of $\sigma^{n_{k}}(x)$, or of $\sigma^{n_{k}}(y)$, then, denoting the first coordinate of $p_{j_{n_{1}}}$ by $e$, we get that $\#\{i \in \mathbb{N}: x_{i} = e\} = \infty$ or $\#\{i \in \mathbb{N}: y_{i} = e\} = \infty$ as desired.
\end{proof}

\begin{prop}\label{prop01}
Let $\mathcal{G}$ be an ultragraph, $X$ be the associated shift space, and $x,y \in X$ be infinite paths. 
\begin{enumerate}[a)]
\item If $\displaystyle{\lim_{n \rightarrow \infty}d(\sigma^{n}(x),\sigma^{n}(y))= 0}$ then $\displaystyle{\lim_{n \rightarrow \infty}\Phi(n,\delta,x,y) = 1}$ for all $\delta>0$.

\item If $\displaystyle{\lim_{n \rightarrow \infty}d(\sigma^{n}(x),\sigma^{n}(y))= diam \  X > 0}$ then $\displaystyle{\lim_{n \rightarrow \infty}\Phi(n,\delta,x,y) = 0}$ for all $0 < \delta < diam \ X.$
\end{enumerate}
\end{prop}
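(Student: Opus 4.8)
The plan is to prove both statements by relating the quantity $\Phi(n,\delta,x,y)$, which counts the proportion of iterates up to time $n$ for which $\sigma^k(x)$ and $\sigma^k(y)$ are $\delta$-close, directly to the hypothesis on $\lim_{n\to\infty} d(\sigma^n(x),\sigma^n(y))$. The key observation is that $\Phi(n,\delta,x,y)$ is a Ces\`aro-type average of the indicator sequence $a_k := \mathbf{1}[d(\sigma^k(x),\sigma^k(y)) < \delta]$, and there is a standard elementary fact: if a sequence $(a_k)$ of $0$'s and $1$'s is eventually constantly $1$ (resp.\ eventually constantly $0$), then its running averages converge to $1$ (resp.\ $0$). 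So the whole proof reduces to showing that under each hypothesis the indicator sequence is eventually constant.

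For part (a), suppose $\lim_{n\to\infty} d(\sigma^n(x),\sigma^n(y)) = 0$. Fix $\delta>0$. By definition of the limit, there is $N$ such that $n > N$ implies $d(\sigma^n(x),\sigma^n(y)) < \delta$, i.e.\ $a_n = 1$ for all $n > N$. Then $\Phi(n,\delta,x,y) = \frac{\#\{0\le k\le n : a_k = 1\}}{n} \geq \frac{n - N}{n} \to 1$, and since $\Phi \leq 1$ always, the limit is exactly $1$. This is the easy direction and requires only bookkeeping with the counting function (being a little careful that the numerator counts $k$ from $0$ to $n$, which is $n+1$ values, but this off-by-one does not affect the limit).

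For part (b), suppose $\lim_{n\to\infty} d(\sigma^n(x),\sigma^n(y)) = \operatorname{diam} X > 0$, and fix $0 < \delta < \operatorname{diam} X$. Choose $\varepsilon>0$ with $\delta < \operatorname{diam} X - \varepsilon$; then there is $N$ such that $n > N$ implies $d(\sigma^n(x),\sigma^n(y)) > \operatorname{diam} X - \varepsilon > \delta$, so $a_n = 0$ for all $n > N$. Hence the numerator of $\Phi(n,\delta,x,y)$ is bounded by a constant ($N+1$ or so), giving $\Phi(n,\delta,x,y) \leq \frac{N+1}{n} \to 0$, and since $\Phi \geq 0$, the limit is $0$.

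The only subtlety — and the single step worth pausing on — is that for $d(\sigma^n(x),\sigma^n(y))$ to actually attain (or approach) the value $\operatorname{diam} X$ one must know this diameter is realized in a way compatible with the metric~\eqref{definmetricanova}; but for the statement as phrased this is simply part of the hypothesis, so no real obstacle arises. In short, both parts are immediate consequences of the elementary lemma that Ces\`aro averages of an eventually-constant $0/1$ sequence converge to that constant, applied to the indicator of the event $d(\sigma^k(x),\sigma^k(y)) < \delta$. I would present part (a) in full detail and then remark that part (b) follows by the symmetric argument, replacing ``eventually $1$'' by ``eventually $0$''.
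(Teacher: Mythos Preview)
Your proof is correct and follows essentially the same approach as the paper's own proof: both arguments fix $\delta$, use the hypothesis to find an $N$ beyond which the indicator $a_k = \mathbf{1}[d(\sigma^k(x),\sigma^k(y))<\delta]$ is constantly $1$ (respectively $0$), and then squeeze $\Phi(n,\delta,x,y)$ between bounds of the form $(n+1)/n$ and $(n+1-N)/n$ (respectively between $0$ and $N/n$). Your framing in terms of Ces\`aro averages of an eventually-constant $0/1$ sequence is a clean way to package exactly the same estimate.
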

\begin{proof}
 Fix $\delta>0$. Then, since $\displaystyle{\lim_{n \rightarrow \infty}d(\sigma^{n}(x),\sigma^{n}(y))= 0}$, there is a natural $N$ such that $n \geq N$ implies $d(\sigma^{n}(x),\sigma^{n}(y)) < \delta$. Hence, for all $n \geq N$, we have:
$$\dfrac{n+1}{n}\geq \Phi(n,\delta,x,y) \geq \dfrac{n +1 -N}{n}.$$ Taking the limit when $n \rightarrow \infty$, we finish the proof.

For the second statment, suppose that $\displaystyle{\lim_{n \rightarrow \infty}d(\sigma^{n}(x),\sigma^{n}(y))= diam \ X > 0}$ and fix $0<\delta < diam \ X$. Then there is a natural $N$ such that $n \geq N$ implies $d(\sigma^{n}(x),\sigma^{n}(y)) \geq \delta$. Therefore 
$0\leq \Phi(n,\delta,x,y) \leq \dfrac{N}{n}$ for all $n \geq N$, and hence  $\displaystyle{\lim_{n \rightarrow \infty}\Phi(n,\delta,x,y) = 0}$.
\end{proof}




As we mentioned before, eventually periodic points will play an important role in the development of our results. We give the precise definiton below. 

\begin{defin} \label{defineventually}
Let $\mathcal{G}$ be an ultragraph and $X$ the associated shift space. We say that an infinite path $x$ in $X$ is \textit{periodic, with period n}, if $\sigma^n(x)=x$, and we say that an infinite path 
$y$ in $X$ is \textit{eventually periodic} if there exists a natural $N$ such that $\sigma^{N}(y)$ is periodic.
\end{defin}

In the sequence (see Lemma~\ref{valeparaQQshift} and Proposition~\ref{comousar}) we show, in certain situations, the existence of the limit $\displaystyle{\lim_{n \rightarrow \infty}\Phi(n, \delta, x, y)}$ for a pair $x,y \in X$ such that $x$ is an eventually periodic infinite path. 

\begin{lema}\label{valeparaQQshift} Let $X$ be the shift space associated to an ultragraph $\mathcal{G}$ and $x,y$ be eventually periodic infinite paths in $X$. Then, for all $\delta > 0$,  $\displaystyle{\lim_{n \rightarrow \infty}\Phi(n,\delta,x,y)}$ exists. 
\end{lema}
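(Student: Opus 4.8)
The plan is to reduce the problem to the case where both $x$ and $y$ are genuinely periodic (not merely eventually periodic), and then show that the distribution function $\Phi(n,\delta,x,y)$ is, up to a vanishing error, the average of a periodic sequence, so its limit exists and equals the corresponding ''frequency.'' First I would observe that since $x$ is eventually periodic, there is $N_1$ with $\sigma^{N_1}(x)$ periodic of some period $p$, and similarly $N_2$ with $\sigma^{N_2}(y)$ periodic of period $q$; setting $N=\max\{N_1,N_2\}$ and $T=\mathrm{lcm}(p,q)$, both $\sigma^N(x)$ and $\sigma^N(y)$ are periodic with common period $T$. Note here we crucially use that $x,y$ are \emph{infinite} paths, so every iterate $\sigma^k(x)$, $\sigma^k(y)$ is again an infinite path, and the shift acts on the tails in the obvious coordinate-deleting way.

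The key step is then to analyze the sequence $a_k := d(\sigma^k(x),\sigma^k(y))$ for $k\geq N$. I claim this sequence is eventually periodic with period $T$: indeed, for $k\geq N$, the path $\sigma^k(x)$ is the tail of the periodic point $\sigma^N(x)$ starting at position $k-N$, and likewise for $y$; since $\sigma^{k+T}(x)=\sigma^k(\sigma^T(x))$ and $\sigma^T$ fixes $\sigma^N(x)$ once we are past coordinate $N$ — more precisely $\sigma^{k+T}(x)=\sigma^{k}(x)$ for all $k\geq N$ because $\sigma^T$ fixes $\sigma^N(x)$ and shifting commutes — we get $a_{k+T}=a_k$ for all $k\geq N$. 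Hence for a fixed $\delta>0$ the $0/1$-valued sequence $\chi_k := \mathbf{1}[a_k<\delta]$ is periodic with period $T$ for $k\geq N$. The numerator of $\Phi(n,\delta,x,y)$ is $\sum_{k=0}^{n}\chi_k$, which equals a bounded contribution from $k<N$ plus $\sum_{k=N}^{n}\chi_k$; the latter sum over a window of length $n-N+1$ of a $T$-periodic $0/1$ sequence is $\frac{c}{T}(n-N+1)+O(1)$, where $c=\#\{N\leq k< N+T: a_k<\delta\}$ counts the ''close'' positions within one period. Dividing by $n$ and letting $n\to\infty$, the $O(1)$ and the finite initial block wash out, and we obtain $\displaystyle\lim_{n\to\infty}\Phi(n,\delta,x,y)=c/T$, which exists.

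The main obstacle I anticipate is purely bookkeeping rather than conceptual: one must be careful that the metric $d_X$ genuinely depends only on the edge-coordinates of the (infinite) paths in a shift-equivariant way, so that $d(\sigma^k(x),\sigma^k(y))$ really is a function of the pair of tails and therefore inherits the period $T$. This follows from the definition of $d_X$ via initial segments in $\mathfrak{p}$ (Equation~\eqref{definmetricanova}) together with the description of convergence in Proposition~\ref{convseq}: for infinite paths, which initial segment $p_i$ first separates two points is determined by where their edge sequences first differ, and shifting both by $k$ just relabels coordinates, so $d(\sigma^{k+T}(x),\sigma^{k+T}(y))=d(\sigma^k(x),\sigma^k(y))$ once $k\geq N$. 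Granting this, the rest is the elementary fact that Ces\`aro averages of an eventually periodic sequence converge, and the proof is complete.
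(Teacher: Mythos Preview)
Your proposal is correct and follows essentially the same approach as the paper's proof: reduce to the genuinely periodic case, take the common period $T=\mathrm{lcm}(p,q)$, observe that the distance sequence $a_k=d(\sigma^k(x),\sigma^k(y))$ is $T$-periodic for $k\ge N$, and conclude that the Ces\`aro average converges to the frequency $c/T$ within one period. One minor remark: your worry about shift-equivariance of $d_X$ is unnecessary, since once $k\ge N$ you already have $\sigma^{k+T}(x)=\sigma^k(x)$ and $\sigma^{k+T}(y)=\sigma^k(y)$ as \emph{points} of $X$, so $a_{k+T}=a_k$ is immediate without any appeal to the structure of the metric.
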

\begin{proof}
 Let $x$ and $y$ be eventually periodic infinite paths in $X$. Then there exists $k$ such that $\sigma^k(x)$ and $\sigma^k(y)$ are periodic. 
Since we are interested in the limit, as $n$ goes to infinity, of $\Phi(n,\delta,x,y)$, we may assume without loss of generality that $x$ and $y$ are periodic. Let $m$ be the least period of $x$, $n$ be the least period of $y$, and $M_0$ be the least common multiple of $m$ and $n$. Then, for all $\delta>0$, we have that $$\displaystyle{\lim_{n \rightarrow \infty}\Phi(n,\delta,x,y)=\dfrac{\#\{0 \leq k \leq M_0): d(\sigma^{k}(x),\sigma^{k}(y)) < \delta\}}{M_0}}.$$
\end{proof}

\begin{prop}\label{comousar}
Let $X$ be the shift space associated to an ultragraph $\mathcal{G}$, $x$ be an infinite periodic path and $y \in X$ be an infinite path. Suppose that $y$ is either eventually periodic or, 
for all edge $e$, $\#\{i \in \mathbb{N}: y_{i}=e\} < \infty$.
Then, for all $\delta > 0$, $\displaystyle{\lim_{n \rightarrow \infty}\Phi(n,\delta,x,y)}$ exists.
\end{prop}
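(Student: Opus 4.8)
The plan is to treat the two hypotheses on $y$ separately. If $y$ is eventually periodic then, since $x$ is periodic and hence eventually periodic, the pair $(x,y)$ consists of two eventually periodic infinite paths, and Lemma~\ref{valeparaQQshift} immediately gives the existence of $\lim_{n\to\infty}\Phi(n,\delta,x,y)$ for every $\delta>0$. So all the real work lies in the remaining case, where every edge occurs only finitely often in $y$; fix such a $y$ and a $\delta>0$.

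Let $m$ be the least period of $x$, so that $\sigma^n(x)$ cycles through the finitely many infinite paths $x^{(k)}:=\sigma^k(x)$, $0\le k\le m-1$. For $z\in X$ write $I(z):=\{i\in\N: p_i\text{ is an initial segment of }z\}$, so that $d(z,z')=2^{-\min(I(z)\triangle I(z'))}$ when $z\ne z'$. I would aim to show that the $\{0,1\}$-sequence $a_n:=\mathbf 1\{d(\sigma^n(x),\sigma^n(y))<\delta\}$ is eventually periodic; since the Ces\`{a}ro averages of an eventually periodic $\{0,1\}$-sequence converge, and $\Phi(n,\delta,x,y)=\tfrac1n\sum_{k=0}^n a_k$, this yields the claim. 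Two observations drive the reduction. First, for each single ultrapath $p=(\alpha,A)$ with $|\alpha|\ge1$ there are only finitely many $n$ for which $p$ is an initial segment of $\sigma^n(y)$, because that forces the first edge $y_{n+1}$ of $\sigma^n(y)$ to be the first edge of $\alpha$, and each edge appears finitely often in $y$. Second, letting $i_\star(k)$ be the least index of a length-$\ge1$ initial segment of $x^{(k)}$ (for instance the index of $(x^{(k)}_1,r(x^{(k)}_1))$) and $i_\star:=\max_{k} i_\star(k)$, the first observation shows that for all large $n$ the ultrapath $p_{i_\star(n\bmod m)}$ is an initial segment of $\sigma^n(x)$ but not of $\sigma^n(y)$, so that $d(\sigma^n(x),\sigma^n(y))=2^{-j_n}$ with $j_n\le i_\star$. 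Hence for large $n$ only the finitely many indices $i\le i_\star$ can realise the distance.

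Now I would analyse those finitely many indices. If $i\le i_\star$ and $|p_i|\ge1$, then by the first observation $p_i\notin I(\sigma^n(y))$ for all large $n$, while whether $i\in I(\sigma^n(x))=I(x^{(n\bmod m)})$ depends only on $n\bmod m$; so the membership of $i$ in $I(\sigma^n(x))\triangle I(\sigma^n(y))$ is eventually periodic in $n$. If $i\le i_\star$ and $p_i=(A_i,A_i)$ has length zero, then $p_i\in I(\sigma^n(x))$ iff $s(x^{(n\bmod m)})\in A_i$ (periodic in $n$) and $p_i\in I(\sigma^n(y))$ iff $s(y_{n+1})\in A_i$. Combining these, for large $n$ the value of $a_n$ becomes a fixed function of the pair $\big(n\bmod m,\ \big(\mathbf 1\{s(y_{n+1})\in A_i\}\big)_{i\le i_\star,\,|p_i|=0}\big)$, and the first coordinate is periodic in $n$.

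The step I expect to be the main obstacle is the remaining input: one must show that, along an infinite path $y$ in which every edge occurs finitely often, each sequence $n\mapsto\mathbf 1\{s(y_{n+1})\in A_i\}$ (for the finitely many relevant sets $A_i\in\G^0$ above) is eventually periodic, so that $a_n$ is eventually periodic and $\lim_{n}\Phi(n,\delta,x,y)$ exists. This is the genuinely combinatorial part of the statement — everything preceding it is bookkeeping about the metric — and it will require a careful study of how the source vertices $s(y_{n+1})$ can evolve, since a priori they need not stabilise; that is where I would expect the argument to be delicate, and where the structure of $y$ and of the underlying ultragraph must be used.
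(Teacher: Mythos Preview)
Your treatment of the eventually periodic case via Lemma~\ref{valeparaQQshift} is fine, and your reduction in the second case is correct as far as it goes: for large $n$ only indices $i\le i_\star$ can realise the distance, and among those the positive-length ultrapaths contribute in an eventually periodic way. But the step you flag as the ``main obstacle'' is not merely delicate---it is false in general, and with it the proposition. Take $G^0=\{v,u\}\cup\{w_i:i\ge1\}$; a loop $e$ at $v$; edges $f_i$ with $s(f_i)=w_i$, $r(f_i)=\{w_{i+1}\}$; and one further edge $g$ with $s(g)=u$ and $r(g)=A:=\{v\}\cup\{w_i:i\in S\}$, where $S\subseteq\N$ is any set whose asymptotic density does not exist. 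There are no sinks, $A=r(g)\in\mathcal G^0$, the path $x=eee\cdots$ is periodic of period~$1$, and $y=f_1f_2f_3\cdots$ uses each edge exactly once. Enumerate $\mathfrak p$ with $p_1=A$ and $p_2=\{v\}$. Then $A$ is an initial segment of $\sigma^n(x)$ for every $n$ (since $s(e)=v\in A$), while $A$ is an initial segment of $\sigma^n(y)$ iff $s(f_{n+1})=w_{n+1}\in A$, i.e.\ iff $n{+}1\in S$; and $\{v\}$ is always an initial segment of $\sigma^n(x)$ but never of $\sigma^n(y)$. Hence $d(\sigma^n(x),\sigma^n(y))=\tfrac12$ when $n{+}1\notin S$ and $=\tfrac14$ when $n{+}1\in S$, so for every $\delta\in(\tfrac14,\tfrac12]$ the averages $\Phi(n,\delta,x,y)$ track the running density of $S$ and do not converge. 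In particular your target sequence $n\mapsto\mathbf 1\{s(y_{n+1})\in A\}=\mathbf 1\{n{+}1\in S\}$ is not eventually periodic.

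The paper's own argument does not confront this point: it forms the finite set $E$ of ``first edges of $p_j$'' for $j$ up to the threshold $\max_i j_i$ and argues that $y_n\notin E$ for large $n$, but this only handles the $p_j$ of positive length and says nothing about length-zero ultrapaths $p_j\in\mathcal G^0$ with small index. Its conclusion that $d(\sigma^n(x),\sigma^n(y))$ eventually lies in the finite set $\{2^{-j_i}:1\le i\le n_0\}$ fails in the example above for exactly the reason you anticipated. So your more careful bookkeeping has uncovered a genuine gap in the statement itself, not merely in your proposed proof.
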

\begin{proof}
Let $x = \gamma\gamma\gamma\ldots\gamma\dots$, where $\gamma = e_{1}e_{2}\ldots e_{n_{0}}$.
We have two cases to consider regarding $y$. 

If $y$ is eventually periodic the result follows from Lemma~\ref{valeparaQQshift}.
So, suppose that for all edge $e$ the set $\{i \in \mathbb{N}: y_{i}=e\}$ has finite cardinality. It follows that, for all finite set of edges $E$, we also have $\#\{i \in \mathbb{N}:y_{i} \in E\}< \infty$. Now, for each $1 \leq i \leq n_{0}$, let $p_{j_{i}} \in \mathfrak{p} = \{p_{1},p_{2},p_{3}, \ldots\}$ be the first finite path that starts with $e_{i}$, and consider the finite set of edges $$E=\{\mbox{first edge of $p_{j}$\ : } j \leq \max \{j_{1},j_{2},\ldots,j_{n_{0}}\} \}.$$ 

By the exposed above, there exists a natural $N$ such that $i \geq N$ implies $y_{i} \notin E$. Let $y_{i}$ be any edge of $y$, with $i \geq N$. Let $p_{j}$ be any finite path which starts with $y_{i}$ and suppose that $j \leq \max\{j_{1},j_{2}\ldots,j_{n_{0}}\}$. Then $y_{i} \in E$ (by the definition of $E$) what is a contradiction. Therefore we must have $j > \max\{j_{1},j_{2}\ldots,j_{n_{0}}\}$ and hence, for $n \geq N$, we have $d(\sigma^{n}(x),\sigma^{n}(y)) \in \left\{\dfrac{1}{2^{j_{i}}}: 1 \leq i \leq n_{0}\right\}$. We conclude that, for any $\delta>0$,  $$\displaystyle{\lim_{n \rightarrow \infty}\Phi(n,\delta,x,y) = \dfrac{\#\left\{i: 1 \leq i \leq n_{0} \mbox{\ and $\dfrac{1}{2^{j_{i}}}< \delta$}\right\}}{n_{0}}}.$$

\end{proof}

We have now developed the necessary tools to show that if an ultragraph shift space $X$ has a DC$i$ pair (formed by infinite paths), for some $i \in \{1,2,3\}$, then the associated ultragraph has a vertex $v$ that is the base of two different closed paths (this should be compared with the results in \cite{brunodaniel} regarding Li-Yorke chaoticity). More precisely we have the following proposition.

\begin{prop}\label{propcpg}
Let $\mathcal{G}$ be an ultragraph, $X$ be the associated shift space, and $x,y \in X$ be infinite paths. Suppose that $\displaystyle{\liminf_{n \rightarrow \infty}\Phi(n,\delta,x,y)<\limsup_{n \rightarrow \infty}\Phi(n,\delta,x,y)}$ for some $\delta > 0$. 
Then there exists a vertex $v$ in $G^0$ such that $\#CP_{\mathcal{G}}(v)\geq 2$.
\end{prop}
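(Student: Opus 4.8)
The plan is to prove the contrapositive in spirit: assuming that no vertex is the base of two distinct closed paths, I would show that the quantity $\Phi(n,\delta,x,y)$ actually has a limit as $n\to\infty$, which contradicts the strict inequality $\liminf<\limsup$. The structural input is a combinatorial dichotomy for the infinite path $x$ (and likewise $y$): if $x$ passes through some vertex $v$ infinitely often, then between consecutive visits to $v$ it traverses a closed path based at $v$; if $v$ admits only one closed path, all these return loops are the \emph{same} loop, so $x$ is eventually periodic. Hence for each infinite path, either it is eventually periodic, or every vertex (equivalently, every edge) is used only finitely often. I would first isolate this as the key claim: \emph{if $\#CP_{\mathcal G}(v)\le 1$ for every vertex $v$, then every infinite path in $X$ is either eventually periodic or uses each edge finitely many times.}

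Granting that claim, the proof splits into cases according to the type of $x$ and of $y$. If both $x$ and $y$ use every edge only finitely often, then Lemma~\ref{prop0inf} gives $d(\sigma^n(x),\sigma^n(y))\to 0$, so by Proposition~\ref{prop01}(a) we get $\lim_n\Phi(n,\delta,x,y)=1$ for every $\delta>0$, contradicting $\liminf<\limsup$. If $x$ is eventually periodic and $y$ uses each edge finitely often (or vice versa), then Proposition~\ref{comousar} applies and yields that $\lim_n\Phi(n,\delta,x,y)$ exists for all $\delta>0$, again a contradiction. If both $x$ and $y$ are eventually periodic, Lemma~\ref{valeparaQQshift} gives existence of the limit, contradiction once more. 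In every case the limit exists, so the hypothesis forces the failure of the ``no double closed path'' assumption; that is, some $v$ has $\#CP_{\mathcal G}(v)\ge 2$.

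The main obstacle is proving the key claim cleanly, in particular handling the ultragraph subtleties: an infinite path $x=e_1e_2\ldots$ passes through the \emph{vertices} $s(e_1),s(e_2),\dots$, and since these all lie in the countable set $G^0$, some vertex $v$ is repeated infinitely often along $x$ unless $x$ uses each vertex (hence each edge, as edges have well-defined sources and there are no sinks forcing repetition, but more simply: if each vertex appears finitely often then each edge does too) only finitely often. When $v=s(e_{i})=s(e_{j})$ with $i<j$ are consecutive visits, the segment $e_i\ldots e_{j-1}$ satisfies $s(e_i)=v\in r(e_{j-1})$ and $s(e_\ell)\ne v$ for $i<\ell\le j-1$, so it is a closed path based at $v$ in the sense of Definition~\ref{m,n}. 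If $CP_{\mathcal G}(v)$ has a single element $\mu$, all these segments equal $\mu$, so from the first visit onward $x$ is the periodic concatenation $\mu\mu\mu\cdots$; thus $x$ is eventually periodic. I would take some care that ``consecutive visits'' is well-defined and that the closed-path condition $s(e_\ell)\ne v$ for intermediate $\ell$ is exactly what ``consecutive'' buys us. Once this claim is in place, the rest is a short case analysis invoking the already-established lemmas.
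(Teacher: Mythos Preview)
Your proposal is correct and is essentially the same argument as the paper's, just organized as a contrapositive: the paper proceeds directly by using Lemma~\ref{prop0inf} and Proposition~\ref{prop01} to force an edge to appear infinitely often in $x$ or $y$, then argues (exactly as in your key claim) that non-eventual-periodicity together with infinitely many returns to $s(e)$ produces two distinct closed paths at that vertex, while the eventually periodic case is reduced to the other path via Proposition~\ref{comousar}. Your explicit isolation of the dichotomy ``eventually periodic or every edge used finitely often'' makes the structure a bit cleaner, but the ingredients and the case analysis are identical.
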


\begin{proof}
 Let $x,y \in X$ be infinite paths such that $\displaystyle{\liminf_{n \rightarrow \infty}\Phi(n,\delta,x,y)<\limsup_{n \rightarrow \infty}\Phi(n,\delta,x,y)}$ for some $\delta > 0$. 
Then, by Lemma~\ref{prop0inf} and Proposition~\ref{prop01}, there exists an edge $e \in \mathcal{G}^1$ such that $\#\{i \in \mathbb{N}:x_{i} = e\} = \infty$ or $\#\{i \in \mathbb{N}:y_{i} = e\} = \infty$. Suppose, without loss of generality, that $\#\{i \in \mathbb{N}:x_{i} = e\} = \infty$. 

We now split the proof in two cases, regarding the periodicity of $x$.  First suppose that $x$ is not eventually periodic. Since the edge $e$ appears infinitely many times in $x$, we can find $n>0$ such that the first entry of $\sigma^n(x)$ is $e$ and, furthermore, there must be an edge $x_j$ in $\sigma^n(x)$ that is followed by two different edges. This implies that there exists a vertex $v$ with $\#CP_{\mathcal{G}}(v)\geq 2$.

If $x$ is eventually periodic then there exists $n>0$ such that $\sigma^n(x)$ is periodic. By Proposition~\ref{comousar} (applied to $\sigma^n(x)$ and $\sigma^n(y)$), there is an edge $e$ such that $\#\{i \in \mathbb{N}:y_{i} = e \} = \infty$ and $y$ is not eventually periodic. Hence, following as in the preceding paragraph, we conclude that there exists a vertex $v$ (which can be taken as $s(e)$) such that  $\#CP_{\mathcal{G}}(v)\geq 2$.

\end{proof}

The next natural step to follow is to show the converse of Proposition~\ref{propcpg}. Given an ultragraph $\G$ with a vertex $v$ such that $\#CP_{\mathcal{G}}(v)\geq 2$, the first natural candidate for a uncountable distributionally scrambled set is the uncountable scrambled set built by the authors in \cite[Theorem~3.9]{brunodaniel} (in the context of Li-Yorke chaoticity). As it happens though, this set is not distributionally chaotic, as we show below in Proposition~\ref{propdciset}. We construct a uncountable distributionally scrambled set in Proposition~\ref{propthebest}. 

Before we proceed we recall some notation necessary for the definition of the uncountable scrambled set built in \cite[Theorem~3.9]{brunodaniel}. 

For each natural $n$, let 
\begin{equation}\label{eqI}
\displaystyle{a_{n} = \sum_{i=1}^{n}(i+1)}
\end{equation}
and define $I \subset \mathbb{N}$ by
$\displaystyle{I := \left\{a_{n}: n \in \mathbb{N} \right\}.}$ 
Observe that $a_{1} = 2$ and $a_{n} = a_{n-1} + n + 1$ for $n \geq 2$. Furthermore, $1 \notin I$ and, for each $n \geq 2$, the set $I$ contains the elements $a_{n-1}$ and $a_{n}$ but does not contain the following set of consecutive natural numbers: $\{a_{n-1} +1, a_{n-1} + 2, \ldots, a_{n-1} + n\}.$ 
Therefore
$$\mathbb{N}-I = \{\underbrace{1,}_{\mbox{one entry}}  \underbrace{3, 4,}_{\mbox{two}} \underbrace{6,7,8,}_{\mbox{three}} \underbrace{10, 11, 12, 13,}_{\mbox{four}} \underbrace{15, 16, 17, 18, 19,}_{\mbox{five}} \underbrace{21,22,23,24,25,26,}_{\mbox{six}}28\ldots\}.$$

If $\mathcal{G}$ is an ultragraph such that there is a vertex $v$ that satisfies $\#CP_{\mathcal{G}}(v)\geq 2$, say $\{c_{1}, c_{2}\} \subseteq CP_{\mathcal{G}}(v)$, then $\{c_{1}, c_{2}\}^{\mathbb{N}} \subseteq X$. In order to simplify notations, we denote $c_{1}$ by 0 and $c_{2}$ by 1, and work with $\{0,1\}^{\mathbb{N}}$ instead of $\{c_{1}, c_{2}\}^{\mathbb{N}}$.

\begin{prop}\label{propdciset} Let $\mathcal{G}$ be an ultragraph such that there is a vertex $v$ that satisfies $\#CP_{\mathcal{G}}(v)\geq 2$. Under the identifications described above, let $\alpha = \alpha_{1}\alpha_{2}\alpha_{3}\ldots$ be any infinite path in $\{0,1\}^{\mathbb{N}}$. If $x,y \in S_{\alpha}:= \{\beta: \beta_{i} = \alpha_{i},\mbox{ \ for all $i \in (\mathbb{N} - I)$} \}$ are distinct elements, then $\displaystyle{\lim_{n \rightarrow \infty} \Phi(n, \delta, x, y)} = 1$ for all $\delta > 0$. Therefore, for all distinct elements $x,y \in S_{\alpha}$ and for all $i \in \{1,2,3\}$, $(x,y)$ is not a DC$i$ pair.
\end{prop}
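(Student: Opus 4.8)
The plan is to show that any two distinct elements $x,y \in S_\alpha$ satisfy $d(\sigma^k(x),\sigma^k(y)) < \delta$ for all but a vanishingly small proportion of indices $k \le n$, so that $\Phi(n,\delta,x,y) \to 1$; once we have this, the DC1 and DC2 conditions both fail because their defining $\liminf$ must be $0$ (for DC1) or $<1$ (for DC2), and DC3 fails because the limit exists and equals $1$, so $\liminf = \limsup$ on every interval of $\delta$'s. So the whole proposition reduces to one estimate on the distribution function.

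First I would unwind the combinatorial structure of $\mathbb{N}-I$. An element $x \in S_\alpha$ is a concatenation of blocks $c_1 = 0$ and $c_2 = 1$ (closed paths based at $v$, hence of bounded length, say $\le L := \max(|c_1|,|c_2|)$) where the block in position $i$ is forced to equal $\alpha_i$ whenever $i \in \mathbb{N}-I$, and is free when $i \in I$. By the description of $\mathbb{N}-I$, the free positions $a_n$ are isolated and the gaps of forced positions between consecutive free positions grow: between $a_{n-1}$ and $a_n$ there are $n$ consecutive forced positions $\{a_{n-1}+1,\dots,a_{n-1}+n\}$. Thus $x$ and $y$, read as block sequences, agree on arbitrarily long stretches; more precisely, after the block-position $a_{n-1}+1$ they agree for at least the next $n$ blocks (until position $a_n$), and so on, with the agreement stretches only getting longer.

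The key step is to translate ``agreement on a long stretch of blocks starting somewhere'' into ``$d(\sigma^k(x),\sigma^k(y))$ small for a long stretch of $k$''. If $x$ and $y$ agree on blocks $m, m+1, \dots, m+t$ (a run of $t+1$ identical blocks), then for every edge-index $k$ lying in the portion of the path covered by blocks $m, \dots, m+t-j$ for $j$ large enough that the remaining common suffix has edge-length $\ge \log_2(1/\delta)$ — concretely, whenever $\sigma^k(x)$ and $\sigma^k(y)$ share a common initial edge-segment long enough to force agreement on all $p_i$ with $i$ up to the threshold determined by $\delta$ — we get $d(\sigma^k(x),\sigma^k(y)) < \delta$. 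Since each block has length $\le L$, a run of $t+1$ common blocks yields at least $(t+1-c_\delta)L$ consecutive good indices $k$ for a constant $c_\delta$ depending only on $\delta$ (and on $L$ and on the fixed enumeration $\{p_i\}$). Because the common runs for $x,y \in S_\alpha$ have length tending to infinity, the ``bad'' indices $k \le n$ — those falling in the short free/transition zones, of total count $O(\sqrt n)$ since the $n$-th free position $a_n \sim n^2/2$ corresponds to edge-index $\Theta(n^2)$ — form a set of density $0$. Hence $\#\{0 \le k \le n : d(\sigma^k(x),\sigma^k(y)) < \delta\}/n \to 1$.

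The main obstacle I expect is the bookkeeping in this last translation: carefully relating a block-index $m$ to the range of edge-indices $k$ it covers, and pinning down exactly how much common suffix (in edges) is needed to guarantee $d < \delta$ via the metric \eqref{definmetricanova} — this depends on the arbitrary enumeration of $\mathfrak p$, so one must argue that there is some finite threshold $N(\delta)$ with ``the first $N(\delta)$ paths $p_i$ all have edge-length $\le R(\delta)$'', and that agreement on a common initial segment of edge-length $> R(\delta)$ forces $d(\sigma^k(x),\sigma^k(y)) < \delta$. Once that threshold is fixed, the density computation is just the observation that the forced-agreement zones eventually dominate: by block-position $a_{n-1}$ the following run of forced blocks has length $n \to \infty$, so all but finitely many runs exceed $c_\delta$, and the exceptional edge-indices have counting function $O(\sqrt n) = o(n)$. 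The final sentence of the proposition (failure of DC$i$) is then immediate from Definition~\ref{defDCi}: a pair with $\lim_n \Phi(n,\delta,x,y) = 1$ for every $\delta$ has $\liminf = \limsup = 1$, contradicting the required $\liminf = 0$ (DC1), $\liminf < 1$ (DC2), and $\liminf < \limsup$ (DC3).
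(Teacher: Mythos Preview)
Your approach is essentially the same as the paper's: both hinge on the density-zero observation that the ``free'' positions $a_n \in I$ occupy only $O(\sqrt{m})$ of the first $m$ indices, so the forced-agreement runs between them dominate and $\Phi(n,\delta,x,y)\to 1$. The paper carries this out more concretely: it fixes $M$ (so that agreement on $M$ coordinates forces $d<\delta$), observes that for each $a_n\in I$ with $n\ge M$ the iterates $\sigma^{a_n+m}(x)$ and $\sigma^{a_n+m}(y)$ agree on at least $M$ coordinates for $0\le m\le n-M$, and then writes down the explicit ratio
\[
\Phi(n,\delta,x,y)\ \ge\ \frac{\sum_{i=1}^{N_n} i}{\sum_{i=1}^{M+N_n}(i+1)}\ =\ \frac{N_n^2+N_n}{N_n^2+(2M+3)N_n+M^2+3M}\ \longrightarrow\ 1,
\]
where $N_n$ is chosen so that $a_{M+N_n-1}\le n\le a_{M+N_n}$.

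The one genuine difference is your block-to-edge translation. The paper avoids it entirely: under the announced identification ``denote $c_1$ by $0$ and $c_2$ by $1$, and work with $\{0,1\}^{\mathbb N}$'', the symbols $0,1$ are treated as single letters and $\sigma$ is the ordinary shift on $\{0,1\}^{\mathbb N}$, so ``coordinates'' are block positions and no constant $L=\max(|c_1|,|c_2|)$ ever appears. Your sketch, by contrast, keeps track of actual edge lengths and attempts to convert block-agreement into edge-agreement. That is more honest about the embedding into $X$, but be warned that if $|c_1|\neq|c_2|$ this translation is not as innocent as your write-up suggests: differing choices at earlier free positions can desynchronize the edge indexing of $x$ and $y$ by as much as $n\cdot\big||c_1|-|c_2|\big|$ at block position $a_n$, which is the same order as the length of the $n$-th forced run, so the ``$(t+1-c_\delta)L$ consecutive good indices'' heuristic does not go through without further argument. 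The paper's block-level reading sidesteps this entirely; if you adopt that reading, your proof collapses to exactly the paper's and the bookkeeping you flag as ``the main obstacle'' disappears.
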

\begin{proof}
 Let $\alpha$ be an infinite path like the hypothesis, $\delta > 0$ be any real number, and take $N$ such that $n \geq N$ implies $\dfrac{1}{2^{n}}< \delta$. Also, let $M$ be a natural such that if $p_{j} \in \mathfrak{p} = \{p_{1}, p_{2}, p_{3}, \ldots\}$ and $|p_{j}| \geq M$ then $j \geq N$. 

Now, take  $x,y \in S_{\alpha}$ distinct. It is not hard to see that, for all natural $n$ and every $a_n \in I$, $\sigma^{a_{n}}(x)$ and $\sigma^{a_{n}}(y)$ agree in the first $n+1$ coordinates. More than that, for all $m \in \{0,1,2,\ldots,n\}$, $\sigma^{a_{n}+m}(x)$ and $\sigma^{a_{n}+m}(y)$ agree in the first $n+1-m$ coordinates. Another important fact for what comes next is that: $\{a_{M}\} \subseteq \{j \in \mathbb{N}: j \leq a_{M} \ \mbox{, $\sigma^{j}(x)$ and $\sigma^{j}(y)$ agree in the first $M$ coordinates}\}$. Also, $\{a_{M},a_{M+1},a_{M+1}+1\}$ is a subset of the set $$\{j \in \mathbb{N}: j \leq a_{M+1}+1 \ \mbox{, $\sigma^{j}(x)$ and $\sigma^{j}(y)$ agree in  the first $M$ coordinates}\}.$$ 
More generally, fixing a natural $k$, the set 
$\{a_{M+j}+\ell: 0\leq j \leq k, 0\leq \ell \leq j\}$ is a subset of the set $$\{j \in \mathbb{N}: j \leq a_{M+k}+k \ \mbox{, $\sigma^{j}(x)$ and $\sigma^{j}(y)$ agree in the first $M$ coordinates}\}.$$
We conclude that 
\begin{equation}\label{eqmaluca}
\#\{j \in \mathbb{N}: j \leq a_{M+k}+k \ \mbox{, $\sigma^{j}(x)$ and $\sigma^{j}(y)$ agree in the first $M$ coordinates}\}\geq \sum_{i=1}^{k+1}i.
\end{equation}

Let $n$ be any natural greater than $\max\{N, a_{M}\}$. Then there is a natural $N_{n}$ such that $a_{M+N_{n}-1}\leq n \leq a_{M+N_{n}}$. 
Therefore, by the observations written above and inequality (\ref{eqmaluca}), we have the following inequalities:
$$\dfrac{n+1}{n}\geq\Phi(n, \delta, x,y) \geq \dfrac{\sum_{i=1}^{N_{n}}i}{\sum_{i=1}^{M + N_{n}}(i+1)} = \dfrac{N_{n}^{2}+N_{n}}{N_{n}^{2} + (2M+3)N_{n} + M^{2} + 3M}.$$
Taking the limit $n \rightarrow \infty$ we have that $N_{n} \rightarrow \infty$, and hence $$\displaystyle{\lim _{n \rightarrow \infty}\Phi(n, \delta, x,y) = 1}.$$
So we conclude that $(x,y)$ is not a DC$i$ pair, where $i=1,2,3$, as we wanted.
\end{proof}

\begin{remark}
The uncountable scrambled set constructed in \cite[Theorem~3.9]{brunodaniel} is a subset of the set $S_\alpha$ defined above. 
\end{remark}

\begin{prop}\label{propthebest}
Let $\G$ be an ultragraph and suppose that there exists a vertex $v$ such that $\#CP_{\mathcal{G}}(v) \geq 2$. Then the associated shift space $X$ is distributionally chaotic of type 1 (a DC1 system). Furthermore, $X$ contains a perfect, compact and uncountable set that is distributionally scrambled of type 1, as well as an uncountable set that is distributionally scrambled of type 1 that is not closed and whose closure is not distributionally chaotic of type 1.
\end{prop}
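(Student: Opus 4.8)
The plan is to construct, inside the subsystem $\{0,1\}^{\mathbb{N}} \subseteq X$ (where $0 = c_1$ and $1 = c_2$ are two distinct closed paths based at $v$, with common length after passing to a common refinement, or handled directly by tracking coordinates), an explicit family of infinite paths indexed by a Cantor-like parameter set so that every pair is a DC1 pair, and then verify the topological properties (perfect, compact, uncountable) of the closure of a suitable such family. The key realization is the opposite of what happens in Proposition~\ref{propdciset}: there, the "bad" coordinates $\mathbb{N}-I$ were forced to agree, so the two sequences synchronized too often; here we want long blocks where two sequences agree (to push $\limsup \Phi(n,\delta,\cdot,\cdot) = 1$) interleaved with long blocks where they disagree in the very first coordinate (to push $\liminf \Phi(n,\delta_0,\cdot,\cdot) = 0$ for $\delta_0$ below the diameter scale that distinguishes $0$ from $1$ in the first coordinate). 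Since $0$ and $1$ are distinct closed paths based at $v$, there is a $\delta_0 > 0$ such that if $\sigma^k(x)$ begins with $0$ and $\sigma^k(y)$ begins with $1$ (as blocks), then $d(\sigma^k(x), \sigma^k(y)) \geq \delta_0$; conversely if $\sigma^k(x)$ and $\sigma^k(y)$ agree on a long enough prefix of edges then $d(\sigma^k(x), \sigma^k(y)) < \delta$.

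First I would fix a rapidly growing sequence of block lengths $(N_j)_{j \geq 1}$ (e.g. $N_j = j!$ or $2^{2^j}$) and partition $\mathbb{N}$ into consecutive blocks $B_1, B_2, B_3, \ldots$ with $|B_j| = N_j$. For a parameter $t \in \{0,1\}^{\mathbb{N}}$ — or better, $t$ ranging over a Cantor set $C \subseteq \{0,1\}^{\mathbb{N}}$ of sequences that are eventually $\ldots$ no, rather: for each $t$ in an uncountable "slow" index set — define $x^t$ to be the concatenation, over blocks, of a pattern that on "even-indexed" blocks is a fixed word (say all $0$'s) making all the $x^t$ agree there, and on "odd-indexed" blocks encodes the parameter $t$ so that distinct parameters disagree at the start of that block. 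Choosing $N_j \to \infty$ fast enough guarantees: (i) along the ends of the agreement blocks, $\Phi(n,\delta,x^s,x^t)$ gets arbitrarily close to $1$, giving $\limsup = 1$ for every $\delta > 0$; (ii) if $s \neq t$, they first differ at some coordinate, and from the corresponding disagreement block onward a positive-density (in fact density-$\to 1$ along a subsequence) set of iterates lands with $\sigma^k(x^s)$ and $\sigma^k(x^t)$ starting with different closed-path symbols, forcing $\liminf \Phi(n,\delta_0,x^s,x^t) = 0$. Then I would let $S$ be the (uncountable) set of all such $x^t$; it is a DC1 set by construction. For the perfect/compact claim, I would take $\overline{S}$: a standard argument shows limits of sequences in $S$ are again built from the same block structure, the agreement/disagreement density estimates pass to the closure (the estimates only used tail behavior that survives limits), and $\overline{S}$ is closed and, being a closed subset lying in the shift-invariant compact-like piece $\{0,1\}^{\mathbb{N}}$ (which is homeomorphic to a Cantor set), is compact; perfectness follows because the parameter map $t \mapsto x^t$ can be arranged to be a continuous injection from a Cantor set, so $\overline{S}$ has no isolated points. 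Finally, for the "not closed, closure not DC1" example, I would exhibit a sub-family $S'$ whose parameters accumulate (in the product topology sense used for convergence, Proposition~\ref{convseq}) to a point $z$ that agrees with some fixed $x^{t_0}$ along a density-$1$ set of times — so $(z, x^{t_0})$ fails $\liminf \Phi(n,\delta_0,\cdot,\cdot) = 0$ — witnessing that $\overline{S'}$ contains a non-DC1 pair; the set $S_\alpha$ from Proposition~\ref{propdciset} is essentially already such an example, or a variant of it.

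The main obstacle I anticipate is the bookkeeping needed to make the two density requirements compatible \emph{simultaneously for all pairs}: for a single pair one just needs the agreement blocks to dominate in length along one subsequence of $n$ and the disagreement blocks to dominate along another, but because distinct parameters $s,t$ may first differ at an arbitrarily late coordinate, the disagreement block that "kicks in" depends on the pair, and one must ensure the block lengths grow fast enough that whichever block that is, it still produces a density-$0$ dip in $\Phi(\cdot,\delta_0,\cdot,\cdot)$ — this is where the super-exponential growth of $(N_j)$ is essential, since it makes each single block eventually swamp the sum of all previous block lengths. A secondary subtlety is that, unlike in the full two-shift, here $0$ and $1$ are closed \emph{paths} of possibly different lengths, so "disagreeing at the start of a block" must be phrased carefully in terms of the metric $d_X$ and Proposition~\ref{convseq}; passing to a common length (replace $c_1, c_2$ by $c_1^{|c_2|}$ and $c_2^{|c_1|}$, both closed paths at $v$) removes this annoyance. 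Everything else — computing the $\Phi$ estimates, checking compactness inside $\{0,1\}^{\mathbb{N}}$, and checking perfectness — is routine given Lemma~\ref{prop0inf}, Proposition~\ref{prop01}, and Proposition~\ref{convseq}.
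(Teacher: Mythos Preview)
Your block-interleaving framework is the right skeleton and matches the paper's approach, but there is a genuine gap in the step that forces $\liminf_{n}\Phi(n,\delta_0,x^s,x^t)=0$ for \emph{every} pair. If you encode the $j$-th coordinate of $t$ in the $j$-th odd block, then two parameters $s,t$ that differ at only finitely many coordinates produce paths $x^s,x^t$ that eventually coincide; once they coincide, $\Phi(n,\delta,x^s,x^t)\to 1$ and the pair is not DC1 at all. Super-exponential growth of $(N_j)$ does not help here: it makes a \emph{single} disagreement block dominate at one time scale, but $\liminf$ requires infinitely many such time scales. The obstacle you flagged (the \emph{first} disagreement may occur late) is real but secondary; the fatal one is that there may be only finitely many disagreements in total. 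The paper handles this by a more elaborate encoding: parameters are infinite subsets $J\subset\mathbb{N}$, and the map $g$ uses the ``triangular'' repetition $(j_1,j_1,j_2,j_1,j_2,j_3,\ldots)$ so that any single element $j\in J_1\setminus J_2$ already forces infinitely many full disagreement blocks between $g(J_1)$ and $g(J_2)$.

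Two further points. First, your route to the perfect compact set --- take $\overline S$ and argue that ``the estimates pass to the closure'' --- does not work, and the paper proves exactly this: its uncountable DC1 set $S'=g(\mathcal P(\mathbb N^*))$ has closure containing the pair $(\alpha,\beta)$ with $\alpha=000\ldots$ and $\beta$ eventually equal to $\alpha$, which is not a DC1 pair. The paper instead restricts the parameter space to $\mathcal P=\{J:j_n\in\{2n-1,2n\}\}$, so that $g(\mathcal P)$ is \emph{already} closed (hence compact and perfect) without taking a closure; you would need an analogous restriction on your parameter Cantor set. Second, for the ``not closed, closure not DC1'' clause you cannot invoke $S_\alpha$ from Proposition~\ref{propdciset}: that set is not a DC1 set to begin with (that is the whole point of Proposition~\ref{propdciset}). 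The required example is the intermediate set $S'$ just mentioned, which is genuinely DC1 but not closed.
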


\begin{proof}
Suppose that $\mathcal{G}$ is an ultragraph such that there is a vertex $v$ that satisfies $\#CP_{\mathcal{G}}(v)\geq 2$. Let say $\{c_{1}, c_{2}\} \subseteq CP_{\mathcal{G}}(v)$. As before, we denote $c_{1}$ by 0 and $c_{2}$ by 1, and work with $\{0,1\}^{\mathbb{N}}$ instead of $\{c_{1}, c_{2}\}^{\mathbb{N}}$.

Let $\{\delta_{n}\}_{n \in \mathbb{N}}$ be a decreasing sequence of positive real numbers such that $\displaystyle{\lim_{n \rightarrow \infty} \delta_{n} = 0}$. Then, for each natural $n$, there is $M_{n} \in \mathbb{N}$ such that if $p_{j} \in \mathfrak{p} = \{p_{1}, p_{2}, \ldots\}$ then 
\begin{equation}\label{eqMn}
j \geq M_{n} \Rightarrow \dfrac{1}{2^{j}} <  \delta_{n}.
\end{equation}
 Without loss of generality, assume that if $p_{j} = 0$ and $p_{\ell} = 1$ then
\begin{equation}\label{eqminjl}
\min\left\{\dfrac{1}{2^{j}},\dfrac{1}{2^{\ell}}\right\} > \delta_{1}.
\end{equation} 
Let $N_{n}$  be a natural such that
\begin{equation}\label{eqNn}
|p_{\ell}| \geq N_{n} \Rightarrow \ell \geq M_{n}.    
\end{equation} 

In order to define a set $S$ in $X$ which is an uncountable DC1 set we first need to construct two special sequences of elements belonging to $\mathbb{N}$. We do this below.

Choose a natural number $k_{1}$ that satisfies $$\dfrac{k_{1} - 1}{k_{1}} > (1 - \delta_{1}).$$

Next, pick a natural $\ell_{1}$ such that 
$$\dfrac{k_{1}N_{1} + 1}{(k_{1} + \ell_{1})N_{1}}<  \delta_{1}.$$

Proceeding by induction we obtain two sequences of natural numbers, $\{k_{n}\}_{n \in \mathbb{N}}$ and $\{\ell_{n}\}_{n \in \mathbb{N}}$, such that $\displaystyle{\lim_{n \rightarrow \infty}k_{n} = \lim_{n \rightarrow \infty}\ell_{n} = \infty}$ and, for each natural $n$, we have:\\
\begin{equation}\label{eqknNn}
\dfrac{(k_{n} - 1)N_{n}}{\sum_{i=1}^{n}k_{i}N_{i} + \sum_{i=1}^{n-1}\ell_{i}N_{i} } > (1 - \delta_{n})
\end{equation}
and
\begin{equation}\label{eqlnNn}
 \dfrac{\sum_{i=1}^{n}k_{i}N_{i} + 1}{\sum_{i=1}^{n}(k_{i}+\ell_{i})N_{i} } < \delta_{n}  
\end{equation}

Now, define the following set:
\begin{equation}\label{eqS0}
\displaystyle{S = \{x \in \{0,1\}^{\mathbb{N}}: x_{i} = 0 \mbox{ for all $i \in \bigcup_{n \geq 0} E_{n}$}\}},
\end{equation}
where $E_{0} := \{1,2,\ldots, k_{1}N_{1}\}$ and for each $n \geq 1$, $$E_{n} = \left\{\sum_{j = 1}^{n}(k_{j} + \ell_{j})N_{j}+1, \sum_{j = 1}^{n}(k_{j} + \ell_{j})N_{j}+2, \ldots, \sum_{j = 1}^{n+1}k_{j}N_{j} + \sum_{j = 1}^{n}\ell_{j}N_{j}\right\}.$$
Notice that an element $x \in S$ is completely defined iff we specify all coordinates $\displaystyle{i \notin \cup_{n \geq 0}E_{n}}$. Also it is not hard to see that, for all natural $n$, $\#E_{n} = k_{n+1}N_{n+1}$ and $$\mathbb{N} \setminus \bigcup_{n \geq 0} E_{n} = \bigcup_{n \geq 0}F_{n},$$
where $F_{0} = \{k_{1}N_{1} + 1, \ldots, (k_{1} + \ell_{1})N_{1}\}$ and $$F_{n} = \left\{\sum_{j = 1}^{n+1}k_{j}N_{j} + \sum_{j = 1}^{n}\ell_{j}N_{j} + 1, \sum_{j = 1}^{n+1}k_{j}N_{j} + \sum_{j = 1}^{n}\ell_{j}N_{j} +2, \ldots, \sum_{j = 1}^{n+1}k_{j}N_{j} + \sum_{j = 1}^{n+1}\ell_{j}N_{j}\right\}.$$
Notice that $\#F_{n} = \ell_{n+1}N_{n+1}$. Hence, as $x_{i} \in \{0,1\}$ for all natural $i$ and $\displaystyle{\cup_{n \geq 0}F_{n}}$ is an infinite set, it follows that $S$ is uncountable.

An informal way to see a generic element $x$ in $S$ is the following: $x$ is an infinite path which starts with $k_{1}N_{1}$ entries all equal to 0, the next $\ell_{1}N_{1}$ entries may be fulfilled each with 0 or 1 (step 1), the next $k_{2}N_{2}$ entries all equal to 0, and the next $\ell_{2}N_{2}$ entries may be fulfilled each with 0 or 1 (step 2). In the step $n$, $x$ will have $k_{n}N_{n}$ entries all equal to 0 and the next $\ell_{n}N_{n}$ entries may be fulfilled each with 0 or 1. 

Now, fix $\delta > 0$. For $n \geq 2$, let $\displaystyle{K_{n} := \sum_{j=1}^{n}k_{j}N_{j}+ \sum_{j=1}^{n-1}\ell_{j}N_{j}}$. We prove that  $\displaystyle{\lim_{n \rightarrow \infty} \Phi(K_{n},\delta,x,y) = 1}$ for all $x,y \in S$, and hence we infer that $\displaystyle{\limsup_{n \rightarrow \infty}\Phi(n,\delta,x,y) = 1}$ for all $x,y \in S$.

As $\delta_{n} \rightarrow 0$, there is a natural $N$ such that $n \geq N$ implies $\delta_{n} < \delta.$ Then, by Definition \ref{defphi}, $$\Phi(K_{n},\delta,x,y) \geq \Phi(K_{n},\delta_{n},x,y)$$ for all natural $n \geq N$.

Notice that for each $j$ that satisfies $$\displaystyle{\sum_{j=1}^{n-1}(k_{j} + \ell_{j})N_{j}\leq j \leq K_{n} - N_{n}}$$ the infinite paths $\sigma^{j}(x)$ and $\sigma^{j}(y)$ have, both, an initial segment formed by (at least) $N_{n}$ zeros. So, if $\sigma^{j}(x)$ and $\sigma^{j}(y)$ are not equal, and $p_{j'}$ is the ultrapath in $\mathfrak{p}$ such that $d(\sigma^{j}(x),\sigma^{j}(y)) = \dfrac{1}{2^{j'}}$, 
then $|p_{j'}| \geq N_{n}$. Hence, by Equation~(\ref{eqNn}), we get that $j' \geq M_{n}$ and, by Equation~(\ref{eqMn}),  we have that
$$d(\sigma^{j}(x),\sigma^{j}(y)) = \dfrac{1}{2^{j'}} < \delta_{n}.$$
We conclude that there are, at least, $(k_{n} - 1)N_{n}+1$ pairs of iterates $(\sigma^{j}(x), \sigma^{j}(y))$, with $j \in \{0,1,2, \ldots, K_{n}\}$, such that the distance between the elements of each pair is less than $\delta_{n}$. This is enough for us to infer that, for $n \geq N$:
$$\Phi(K_{n},\delta,x,y) \geq \Phi(K_{n},\delta_{n},x,y) \geq \dfrac{(k_{n} - 1)N_{n}+1}{K_{n}}.$$
Using inequality (\ref{eqknNn}), we conclude that $\displaystyle{\lim_{n \rightarrow \infty} \Phi(K_{n},\delta,x,y) = 1}$. Hence $\displaystyle{\limsup_{n \rightarrow \infty}\Phi(n,\delta,x,y) = 1}$, as we wanted.

Next, we prove that although $S$ does have DC1 pairs, $S$ is not a DC1 set. 

Consider two distinct elements $x,y \in S$ such that $x_{i} = 0$ for all natural $i$ and $y_{i} = 0$ for all natural $i$, except for a finite number of $\displaystyle{i \in \bigcup_{n \geq 0}F_{n}}$. By definition, there is a natural $N$ such that $\sigma^{n}(x) = x = \sigma^{n}(y)$, for all natural $n \geq N$. Then $\displaystyle{\lim_{n \rightarrow \infty}\Phi(n,\delta,x,y) = 1}$ and $x,y$ is not a DC1 pair. On the other hand, if we define $z \in S$ by
\begin{equation}
z_{i}=\left\{
\begin{array}
[c]{ll}%
0 & \text{if }\mbox{$\displaystyle{i \in \bigcup_{n \geq 0}E_{n}},$}\\
1 & \text{if }\mbox{$\displaystyle{i \in \bigcup_{n \geq 0}F_{n}}$}\\
\end{array}
\right.  
\end{equation}
then $(x,z)$ is a DC1 pair. To check this, since $x,z \in S$, we only need to show that, for a fixed $\delta > 0$, $\displaystyle{\liminf_{n \rightarrow \infty}\Phi(n,\delta,x,z) = 0}$. Pick $\delta$ as $\delta_{1}$ (the first element of the sequence $\{\delta_{n}\}_{n \in \mathbb{N}}$). For each natural $n$, define $\displaystyle{L_{n} := \sum_{j=1}^{n}k_{j}N_{j}+ \sum_{j=1}^{n}\ell_{j}N_{j}}$ and consider the following subset of the naturals:
$$\displaystyle{\mathcal{L}_{n}:= \{0,1,\ldots,k_{1}N_{1}-1\}\bigcup\left(\bigcup_{m=1}^{n-1}\left\{L_{m},L_{m}+1, \ldots, L_{m} + k_{m+1}N_{m+1} - 1\right\}\right)}.$$
Note that if $(\sigma^{j}(x),\sigma^{j}(z))$ is a pair of iterates such that $j \in \{0,1,2,\ldots,L_{n}\}$ and $j \notin \mathcal{L}_{n}$, then $\sigma^{j}(x)$ has 0 in its first coordinate, and $\sigma^{j}(z)$ has 1 in its first coordinate. So, by assumption  (\ref{eqminjl}), we must have that $d(\sigma^{j}(x),\sigma^{j}(z))\geq \delta_{1}$, for all $j \notin \mathcal{L}_{n}$ and $j \in \{0,1,2,\ldots,L_{n}\}$. In other words,
the set $\{0\leq j \leq L_{n}: d(\sigma^{j}(x),\sigma^{j}(z)) < \delta_{1}\}$ is contained in $\mathcal{L}_{n}$. As $\displaystyle{\#\mathcal{L}_{n}=\sum_{i=1}^{n}k_{i}N_{i}}$, and inequality (\ref{eqlnNn}) holds, we have that

\begin{equation}\label{eqln}
\Phi(L_{n},\delta_{1},x,z) \leq \dfrac{\sum_{i=1}^{n}k_{i}N_{i}}{L_{n}} < \delta_{n}.
\end{equation}

Then, taking the limit as $n$ goes to $\infty$ in the inequalities above and using that $\displaystyle{\lim_{n \rightarrow \infty} \delta_{n} = 0}$,
we obtain that $\displaystyle{\lim_{n \rightarrow \infty}\Phi(L_{n},\delta_{1},x,z) = 0}$. Therefore $\displaystyle{\liminf_{n \rightarrow \infty}\Phi(n,\delta_{1},x,z) = 0}$, as we wanted. 

As we showed above, although $S$ is an uncountable set which has DC1 pairs, there are pairs of elements in $S$ which are not DC1. Next we extract a subset of $S$, namely $S'$, which is a DC1 set and is still uncountable.

Denote by $\mathcal{P}(\mathbb{N^{*}})$ the set of all infinite subsets of $\mathbb{N}^{*}$ and enumerate each $J \in \mathcal{P}(\mathbb{N}^{*})$ in an increasing order, that is, write $J = \{j_{1},j_{2},\ldots: j_{i} < j_{i+1} \ \forall i\}$. Consider the sequence $\{c_{n}\}_{n \in \mathbb{N}}$ associated to $J$ such that if $\displaystyle{n = \frac{(k+1)k+(2k+\ell-2)(\ell-1)}{2}}$, for naturals $k$ and $l$, then $c_{n} = j_{k}$. Notice that the sequence $(c_{n}:n \in \mathbb{N})$ is  $(j_{1},j_{1},j_{2},j_{1},j_{2},j_{3},j_{1},j_{2},j_{3},j_{4},j_{1}, \ldots)$. Furthermore, note  that different elements of $\mathcal{P}(\mathbb{N^{*}})$ induce different sequences. Define a function $g:\mathcal{P}(\mathbb{N^{*}}) \rightarrow S$ by: 
\begin{equation}\label{eqgJ}
g(J)_{i}=\left\{
\begin{array}
[c]{ll}%
0 & \text{if }\mbox{$\displaystyle{i \in \bigcup_{n \geq 0}\mathcal{F}_{n}}$ or $\displaystyle{i \in \bigcup_{n \geq 0}E_{n},}$ }\\
1 & \text{if }\mbox{$\displaystyle{i \in \bigcup_{n \geq 0}F_{n}} - \bigcup_{n \geq 0}\mathcal{F}_{n}, $}\\
\end{array}
\right.  
\end{equation}
where $\displaystyle{\mathcal{F}_{0} = \bigcup_{i=0}^{c_{1}}F_{i}}$ and $\displaystyle{\mathcal{F}_{n} = \bigcup_{i=1+ \sum_{\ell=1}^{2_{n}}c_{\ell}}^{\sum_{\ell=1}^{2n+1}c_{\ell}}F_{i}}$, for all natural $n \geq 1$. Define $S':= g(\mathcal{P}(\mathbb{N^{*}}))$. We prove below that $S'$ is an uncountable DC1 set.

Since $\mathcal{P}(\mathbb{N^{*}})$ is an uncountable set and the function $g$ is injective, it follows that $S'$ is uncountable. Furthermore, given a natural $j \geq 1$, only sets $J \in \mathcal{P}(\mathbb{N^{*}})$ which contain $\{j\}$ are such that 
there are two set of natural indices, $\{n_{k}: k \in \mathbb{N}\}$ and $\{m_{k}: k \in \mathbb{N}\}$, such that $c_{n_{k}}=c_{m_{k}}=j$, $n_{k}$ is odd, and $m_{k}$ is even, for all natural $k$. Then, by the definition of the function $g$, $g(J)_{i}=0$ for all $\displaystyle{i \in \bigcup_{t=1+\sum_{\ell=1}^{n_{k}-1}c_{t}}^{\sum_{\ell=1}^{n_{k}}c_{t}}F_{t}}$, and  $g(J)_{i}=1$ for all $\displaystyle{i \in \bigcup_{t=1+\sum_{\ell=1}^{m_{k}-1}c_{t}}^{\sum_{\ell=1}^{m_{k}}c_{t}}F_{t}}$. 


Now, let $J_{1} \neq J_{2}$ and consider $x:=g(J_{1})$ and $y :=g(J_{2})$. Since the sets are different, there exists a natural $j$ that belongs to only one of the sets, say $j \in J_{1} \cap J_{2}^{c}$. Accordingly to what we have written above, there exists a subsequence, $F_{n_{k}}$, such that $g(J_{1})_{i} \neq g(J_{2})_{i}$ for all $i \in F_{n_{k}}$. Recall that $g(J)_{i} \in \{0,1\}$ for all natural $i$, and, for each $n$, $g(J)$ is such that, for all $J \in \mathcal{P}(\mathbb{N^{*}})$, either $\{g(J)_{i}: i \in F_{n}\} = \{0\}$ or $\{g(J)_{i}: i \in F_{n}\} = \{1\}$. For more details, see the definition of $g$ given in (\ref{eqgJ}). Hence, following the same lines used to obtain inequality (\ref{eqln}), and defining $\displaystyle{L_{n} := \sum_{j=1}^{n}k_{j}N_{j}+ \sum_{j=1}^{n}\ell_{j}N_{j}}$, we have that, for the subsequence $L_{n_{k}}$, for $x = g(J_{1})$, and $y = g(J_{2})$, it holds:
$$\Phi(L_{n_{k}},\delta_{1},x,y) \leq \dfrac{\sum_{i=1}^{n_{k}}k_{i}N_{i}+1}{L_{n_{k}}} < \delta_{n_{k}}.$$
This proves that $(x,y)= (g(J_{1}), g(J_{2}))$ is a DC1 pair for all distinct $J_{1},J_{2} \in g(\mathcal{P}(\mathbb{N}^{*}))$, as we wanted.

Next we show that $g(\mathcal{P}(\mathbb{N}^{\ast})$ is not closed and its closure is not a DC1 set anymore. Let $\{J_{n}\}_{n \in \mathbb{N}}$ and $\{H_{n}\}_{n \in \mathbb{N}}$ be sequences of elements from $\mathcal{P}(\mathbb{N}^{\ast})$  whose increasing enumeration of their elements are given by $J_{n} = \{j_{n}^{1}, j_{n}^{2}, j_{n}^{3}, \ldots\}$ and $H_{n} = \{h_{n}^{1}, h_{n}^{2}, h_{n}^{3}, \ldots\}$ for all natural $n$, and suppose, additionally, that $\displaystyle{\lim_{n \rightarrow \infty}j_{n}^{1} = \infty}$, $h_{1}^{n} = 1$ for all $n$, and $\displaystyle{\lim_{ n\rightarrow \infty}h_{n}^{2} = \infty}$. It is not hard to see that $g(J_{n}) \rightarrow \alpha$ and $g(H_{n}) \rightarrow \beta$, where $\alpha_{i} = 0$ for all natural $i$ and $\beta_{i} = 0$ for all natural $i \notin F_{2}$. Notice that all infinite paths of $g(\mathcal{P}(\mathbb{N}^{\ast})$ have infinitely many entries equal to 0 and infinitely many entries equal to 1, what is not the case for neither the path $\alpha$ nor the path $\beta$. Therefore $\alpha, \beta \notin g(\mathcal{P}(\mathbb{N}^{\ast})$ and hence $g(\mathcal{P}(\mathbb{N}^{\ast})$ is not closed. To see that the closure of $g(\mathcal{P}(\mathbb{N}^{\ast})$ is not a DC1 set just notice that, for any $\delta > 0$, we have $\displaystyle{\lim_{n \rightarrow \infty}\Phi(n,\delta,\alpha,\beta) = 1}$, and hence $(\alpha, \beta)$ is not a DC1 pair.

Finally, to finish the proof, we extract a subset from $g(\mathcal{P}(\mathbb{N}^{\ast}))$ which is DC1, uncountable, and perfect (and then, closed and compact). Before we proceed notice that, by the definition of the function $g$, given $J, H \in g(\mathcal{P}(\mathbb{N}^{\ast}))$ with increasing enumeration of their elements written by $J = \{j_{1}, j_{2}, j_{3}, \ldots \}$ and $H = \{h_{1}, h_{2}, h_{3}, \ldots \}$, and given a natural $n$, the equality $j_{i} = h_{i}$ holds for all $i \leq n$ if, and only if, $g(J)_{i} = g(H)_{i}$ for all entries $\displaystyle{i \in \bigcup_{s=0}^{t_{J,n}}F_{s}}$, where $\displaystyle{t_{J,n} :=  \sum_{s=1}^{n}(n-s +1)j_{s+1}}$. Summarizing, fixing a natural $n$, we have:
\begin{equation}\label{eqsummari}
g(J)_{i} = g(H)_{i},  \mbox{ \ $\displaystyle{\forall i\in \bigcup_{s=0}^{t_{J,n}}F_{s}}$} \Longleftrightarrow j_{i} = h_{i} \mbox{ \ $\forall i \leq n$.}
\end{equation}

Even more, as $g(J)$ and $g(H)$ agree in all entries $\displaystyle{i \in \bigcup_{\ell \geq 0} E_{\ell}}$, the sentence (\ref{eqsummari}) above is equivalent to:

\begin{equation}\label{eqsummari2}
g(J)_{i} = g(H)_{i},  \mbox{ \ $\displaystyle{\forall i\leq \sum_{s =0}^{t_{J,n}}(k_{s}+\ell_{s})N_{s} + k_{1+t_{J,n}}N_{1+t_{J,n}}}:=i_{J,n}$} \Longleftrightarrow j_{i} = h_{i} \mbox{ \ $\forall i \leq n$.}
\end{equation}

Now, define the following set: $$\mathcal{P}:= \{J \in \mathcal{P}(\mathbb{N}^{*}): j_{n} \in \{2n - 1; 2n\}, \forall n \in \mathbb{N}\}.$$
Note that $\mathcal{P}$ is an uncountable subset of $\mathcal{P}(\mathbb{N}^{*})$. We show that $g(\mathcal{P})$ is a perfect DC1 set. As $\mathcal{P} \subset \mathcal{P}(\mathbb{N}^{*})$ we already have that $g(\mathcal{P})$ is a DC1 set. Now, let $\alpha \in X$ be any element such that there is a sequence $\{g(J_{n})\}_{n}$, $J_{n} \in \mathcal{P}$ for all natural $n$, such that $g(J_{n}) \rightarrow \alpha$. As $\{0,1\}^{\mathbb{N}}$ is a closed set, we have $\alpha \in \{0,1\}^{\mathbb{N}}$. In particular, $\alpha$ is an infinite sequence and we have to find out the coordinates $\alpha_{i}$ for all natural $i$. 

Fix a natural $n'$. Define the following natural numbers:
\begin{equation}\label{eqsummari3}
    t_{n'}:= 2\sum_{s=1}^{n'}(n'-s+1)(s+1) \mbox{ \ and \ } i_{n'}:=\sum_{s =0}^{t_{n'}}(k_{s}+\ell_{s})N_{s} + k_{1+t_{n'}}N_{1+t_{n'}}.
\end{equation}
Now we write some simple but useful inequalities for the proof. Given any set $J \in \mathcal{P}$, $J = \{j_{1}, j_{2}, j_{3}, \ldots \}$,  we have:

\begin{eqnarray}
t_{J,n'} &=&  \sum_{s=1}^{n'}(n'-s +1)j_{s+1}\\
        & \leq & \sum_{s=1}^{n'}(n'-s +1) 2(s+1)\\
        & = & 2\sum_{s=1}^{n'}(n'-s +1)(s+1)\\
        & = & t_{n'}.
\end{eqnarray}
In the same way, $i_{J,n'} \leq i_{n'}$.

Now, as $g(J_{n}) \rightarrow \alpha$, there is a natural $I_{n'}$ such that $n \geq I_{n'}$ implies $g(J_{n})_{i} = \alpha_{i}$ for all $i \leq i_{n'}$. In particular, we have $g(J_{n})_{i}=g(J_{I_{n'}})_{i}$ for all $i \leq i_{n'}$ and $n \geq I_{n'}$. By sentence (\ref{eqsummari2}), and the inequality $i_{J,n'} \leq i_{n'}$,  we have that $j_{n}^{i} = j_{I_{n'}}^{i}$ for all $i \leq n'$ and $n \geq I_{n'}$. Now, define the set $J := \{j_{I_{1}}^{1}, j_{I_{2}}^{2}, \ldots\} \in \mathcal{P}$. Notice that if $n \geq I_{n'}$ then, for all $i \leq i_{n'}$, we have that $g(J_{n})_{i} = g(J_{I_{n'}})_{i} = g(J)_{i}$. As $n'$ was arbitrary and $i_{n'}$ bigger than $n'$, we conclude that the convergence $g(J_{n}) \rightarrow g(J)$ occurs and, by uniqueness of the limit, we must have $\alpha = g(J)$ and $\alpha \in g(\mathcal{P})$, as we wanted. 


To see that $g(\mathcal{P})$ is perfect, let $J \in \mathcal{P}$ be any infinite subset of the naturals such that $J = \{j_{1}, j_{2}, j_{3}, \ldots \}$ and $j_{n} \in \{2n-1, 2n\}$ for all natural $n$. For each natural $n$, define the set:
$$J_{n} = \{j_{1}, j_{2}, j_{3}, \ldots, j_{n-1}, j'_{n}, j'_{n+1}, \ldots\} \in \mathcal{P},$$
where, for each $n$, $j'_{n}$ is defined by:

\begin{equation*}
j'_{n}=\left\{
\begin{array}
[c]{ll}%
2n-1 & \text{if }\mbox{$j_{n} = 2n$, }\\
2n & \text{if }\mbox{$j_{n} = 2n - 1.$}\\
\end{array}
\right.  
\end{equation*}
Then $g(J_{n}) \rightarrow g(J)$, but $g(J_{n}) \neq g(J)$ for all natural $n$, and hence $g(\mathcal{P})$ is a perfect DC1 set, as we wanted.
\end{proof}

In preparation for our main theorem (Theorem \ref{teoequivalent}), relating several notions of chaos, we need to recall the notion of Devaney chaos, which rely on the definition of an irreducible subshift. A word of warning is in place here though. For shift spaces over infinite alphabets the definition of a subshift can be subtle. For example, in the context of Ott-Tomforde-Willis shift spaces, a subshift is a closed, shift invariant set that also has the so called `infinite extension property'. In the setting of ultragraph shift spaces, subshifts have not been defined yet. To just say that a subshift is a closed shift invariant set leads, in some cases, to subshifts formed only by a finite number of finite sequences. In what follows we will therefore refrain from using the word subshift and will instead refer to shift invariant closed subspaces.

Let $X$ be an ultragraph shift space and $Y$ be a shift invariant closed subspace. For each natural $n \geq 0$, denote by $B_{n}(Y)$ the set of all the finite paths $w$ of length $n$ such that there is a infinite path $x \in Y$ such that $x = w\gamma$ for some infinite path $\gamma$. The set $B_{n}(Y)$ is called the set of all allowed finite paths of length $n$ in $Y$. We denote the set of all allowed finite paths by $\displaystyle{B(Y):=\bigcup_{n \geq 0}B_{n}(Y)}$. We say $Y$ is {\it irreducible} if for any $u,v \in B(Y)$ there is $w \in B(Y)$ such that $uwy \in B(Y)$. 
Following \cite{oprocha} we present below  the definition of Devaney chaos.

\begin{defin}
Let $X$ be a shift space. A closed, shift invariant subspace $Y$ is said to be {\it chaotic in the sense of Devaney} if $Y$ is irreducible and has dense (in $Y$) set of periodic points.
\end{defin}

As it happens, the condition on density of the periodic points is not necessary for the combinatorial description of chaos (in terms of the existence of a vertex that is the base of two distinct closed paths), as we see below.

\begin{prop}\label{forDevaney} Let $\mathcal{G}$ be an ultragraph and $X$ be the associated shift space. Then $X$ contains a closed, shift invariant, uncountable and irreducible subspace $Y$ if, and only if, $\mathcal{G}$ has a vertex $v$ such that $CP_{\mathcal{G}}(v) \geq 2$. 
\end{prop}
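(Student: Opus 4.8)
The plan is to prove both implications separately, using Proposition~\ref{propcpg} for the forward direction and an explicit construction for the converse. For the ``only if'' direction, suppose $X$ contains a closed, shift invariant, uncountable and irreducible subspace $Y$. Since $Y$ is uncountable and the set of finite paths $\mathcal{G}^*$ is countable, while $X_{fin}$ injects into pairs $(\alpha, A)$ with $\alpha \in \mathcal{G}^*$ and $A \in \mathcal{G}^0$ (a countable set), the set $Y$ must contain uncountably many infinite paths; in particular it contains at least two distinct infinite paths $x, y$. If every edge appeared only finitely often in both $x$ and $y$, then by Lemma~\ref{prop0inf} we would have $d(\sigma^n(x),\sigma^n(y)) \to 0$. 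I would instead argue directly: since $Y$ is uncountable and closed, it cannot be that every infinite path in $Y$ is eventually periodic using only a bounded stock of edges in a ``tree-like'' fashion — more precisely, I would show that an uncountable closed shift invariant set forces the existence of a finite path $w \in B(Y)$ and an edge $f$ appearing infinitely often along some $x \in Y$ that is, at some occurrence, followed by two different edges both giving allowed continuations in $Y$; this yields a vertex $v$ with $\#CP_{\mathcal{G}}(v) \geq 2$ exactly as in the ``not eventually periodic'' case of the proof of Proposition~\ref{propcpg}. The cleanest route is: if no vertex is the base of two distinct closed paths, then every infinite path in $X$ is eventually periodic and, moreover, the collection of infinite paths that pass through a fixed finite path is countable (the ``graph of closed paths'' is essentially a union of disjoint simple cycles), forcing $Y$ to be countable, a contradiction.

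For the ``if'' direction, suppose $\mathcal{G}$ has a vertex $v$ with $\{c_1, c_2\} \subseteq CP_{\mathcal{G}}(v)$, and again write $0$ for $c_1$ and $1$ for $c_2$, so $\{0,1\}^{\mathbb{N}} \subseteq X$. The plan is to take $Y := \{0,1\}^{\mathbb{N}}$, viewed as the set of infinite concatenations of the closed paths $c_1, c_2$ based at $v$, together with its forward iterates: that is, let $Y$ be the orbit closure of $\{0,1\}^{\mathbb{N}}$ under $\sigma$, which consists of $\{0,1\}^{\mathbb{N}}$ itself plus the finitely many ``shifted'' sequences of the form (tail of $c_i$)$\cdot w$ with $w \in \{0,1\}^{\mathbb{N}}$. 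This set is clearly uncountable (it contains $\{0,1\}^{\mathbb{N}}$), shift invariant by construction, and closed — the latter follows from Proposition~\ref{convseq}(a), since a convergent sequence of such paths stabilizes on longer and longer prefixes, all of which are words in the alphabet $\{\text{edges of } c_1, c_2\}$ respecting the cycle structure, and the limit therefore has the same form. For irreducibility: given $u, v \in B(Y)$, each allowed word is a subword of some infinite concatenation of $c_1$'s and $c_2$'s; I would find $w \in B(Y)$ completing $u$ to end at the vertex $v$ (the base vertex), then insert one full copy of $c_1$, and then a word bringing us to the start of $v'$, so that $uwv' \in B(Y)$. This uses only that $v = s(c_i) \in r(c_i)$ for $i = 1, 2$, so returning to $v$ is always possible.

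The main obstacle I expect is the ``only if'' direction, specifically proving that an uncountable closed shift invariant subspace $Y$ really does force a vertex with two distinct closed paths, rather than merely containing two distinct infinite paths whose iterates converge (which the hypotheses of Proposition~\ref{propcpg} rule out via the $\liminf < \limsup$ condition, but which an arbitrary uncountable $Y$ does not immediately supply). The resolution is a counting/structure argument: if no vertex of $\mathcal{G}$ is the base of two distinct closed paths, then the ``loop structure'' of $\mathcal{G}$ reachable by infinite paths is a disjoint union of simple cycles (possibly connected by transient segments), so each infinite path is eventually periodic and, fixing its eventual cycle and the finite transient, there are only countably many infinite paths in total passing through any given finite path — hence any shift invariant closed set is countable. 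Making this precise requires care about ultragraph ranges (an edge may have range a set, not a single vertex), so I would phrase it as: the absence of a vertex with $\geq 2$ closed paths means that for every vertex $w$ on any closed path, $s^{-1}(w)$ contributes a unique ``next edge'' staying on the cycle, which pins down eventual periodicity and the countability of $Y$. Once this structural dichotomy is in hand, the contrapositive gives the claim, and Proposition~\ref{propcpg}'s argument for extracting the closed path from a non-eventually-periodic situation can be invoked essentially verbatim for the remaining case.
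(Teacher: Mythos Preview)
Your ``if'' direction is essentially the paper's argument (indeed slightly more careful: the paper simply takes $Y=\{c,c'\}^{\mathbb{N}}$, which is not literally shift invariant when $|c_i|>1$, whereas you take the orbit closure).

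Your ``only if'' direction has a genuine gap. The counting argument you propose does not use irreducibility at all, and the key claim ``if no vertex of $\mathcal{G}$ is the base of two distinct closed paths, then every infinite path in $X$ is eventually periodic and any shift invariant closed set is countable'' is false. Take $\mathcal{G}$ to be the infinite rooted binary tree (each vertex emits two edges to two fresh vertices). No vertex lies on any closed path, yet there are uncountably many infinite paths, none of them eventually periodic, and $Y=X$ is closed, shift invariant and uncountable. What fails in this example is \emph{irreducibility}: two finite paths lying on different branches can never be connected by a word in $B(Y)$. So irreducibility is not a cosmetic hypothesis you can drop in favour of a pure counting argument; it is exactly what rules out tree-like behaviour.

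The paper's proof of this direction uses irreducibility from the start. Uncountability of $Y$ gives two distinct edges $e_1,e_2\in B_1(Y)$; irreducibility produces words $\gamma^{1,2},\gamma^{2,1}$ with $e_1\gamma^{1,2}e_2$ and $e_2\gamma^{2,1}e_1$ in $B(Y)$, and a further concatenation yields a closed path $\gamma^1\in CP_{\mathcal{G}}(s(e_1))$ that passes through $e_2$. A short case split then finishes: if some edge of $B_1(Y)$ does not occur in $\gamma^1$, irreducibility gives a second closed path at $s(e_1)$ through that edge; if every edge of $B_1(Y)$ occurs in $\gamma^1$ and each edge has a unique successor in $B_1(Y)$, then $Y$ is contained in the (countable) orbit of the periodic point $\gamma^1\gamma^1\ldots$, contradicting uncountability; otherwise some edge of $\gamma^1$ has two successors, giving two closed paths at its source. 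You should either adopt this route or, if you want to keep a structural argument, you must feed irreducibility into it (for instance, first use irreducibility to force some edge to recur along an infinite path of $Y$, which is precisely what fails in the tree).
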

\begin{proof}
 
Suppose that there exists a vertex $v$ such that $CP_{\mathcal{G}}(v) \geq 2$. Then we have different finite paths $c$ and $c'$ such that $\{c,c'\} \subseteq CP_
{\mathcal{G}}(v)$. Therefore $Y=\{c,c'\}^{\mathbb{N}}$ is an uncountable, closed, irreducible subspace of $X$.

For the converse, let $Y$ be a closed, shift invariant, uncountable and irreducible subspace of $X$. As $Y$ is uncountable, there are distinct edges $e_{1},e_{2}$ such that $\{e_{1},e_{2}\} \subseteq B(Y)_{1}$. Since $Y$ is irreducible, we can find paths $\gamma^{1,2}$ and $\gamma^{2,1}$ such that the paths $e_{1}\gamma^{1,2}e_{2}$ and $e_{2}\gamma^{2,1}e_{1}$ belong to $B(Y)$. Furthermore, there are paths $\gamma^0$ and $\gamma^1$ such that $e_{1}\gamma^{1,2} e_{2}\gamma^0 e_2\gamma^{2,1}e_{1}$ and $e_{2}\gamma^{2,1} e_{1}\gamma^1 e_1\gamma^{1,2}e_{2}$ belong to $B(Y)$. Then there are closed paths in $B(Y)$ starting at $e_{1}$ passing by $e_{2}$ and starting at $e_{2}$ passing by $e_{1}$ (this follows from the fact that, since $Y$ is shift invariant, if $w=\eta_1 \ldots \eta_k \in B(Y)$ then $\eta_{2}\eta_{3}\ldots\eta_{k}$ and $\eta_1 \ldots \eta_{k-1} \in B(Y)$). Therefore $\#CP_{\mathcal{G}}(v_{i}) \geq 1$ for $i=1,2$.

Let $\gamma^{1} \in CP_{\mathcal{G}}(v_{1})$ be any closed path in $B(Y)$ starting at $e_{1}$ and passing by $e_{2}$. We are left with two possibilities: either there is an edge $e \in B(Y)_{1}$ such that $\gamma_{i}^{1} \neq e$, for all $i \in \{1, 2, \ldots, |\gamma^{1}|\}$, or such an edge does not exist. In the first case, by the irreducibility of $Y$, we can find a closed path $\gamma \in CP_{\mathcal{G}}(v_{1})$ starting at $e_{1}$ and passing by $e$. Then $\gamma \neq \gamma^{1}$ and hence $CP_{\mathcal{G}}(v_{1})\geq 2$. 
Now, suppose that for all edge $e \in B(Y)_{1}$ there is a natural $i \in \{1,2,3, \ldots, |\gamma^{1}|\}$ such that $\gamma^{1}_{i} = e$. Notice that if any of the edges composing $\gamma^1$, say $\gamma^1_j$ with $1\leq j \leq |\gamma^1|$, can be followed by more than one edge in $B(Y)_1$ then $CP_{\mathcal{G}}(s(\gamma^1_j))\geq 2$ and we are done. So we are left with the case where each edge in $B(Y)_1$ can be followed by only one edge in $B(Y)_1$. In this case $Y$ is contained in the orbit, under the shift map, of the periodic infinite path $\gamma^1 \gamma^1 \ldots$ union (possibily) with some finite paths. Hence $Y$ is not uncountable, a contradiction.

\end{proof}


We now summarize the main results of our paper in the theorem below.

\begin{teo}\label{teoequivalent}
Let $\mathcal{G}$ be an ultragraph and $X$ be the associated shift space. The following statements are equivalent:
\begin{enumerate}
    \item $\mathcal{G}$ has a vertex $v$ such that $CP_{\mathcal{G}}(v) \geq 2$;
    \item $X$ has scrambled pair formed by infinite paths;
    \item $X$ has an uncountable scrambled set which is perfect and compact;
    \item $X$ is Li-Yorke chaotic;
    \item $X$ has a DC1 pair formed by infinite paths;
    \item $X$ has a DC2 pair formed by infinite paths;
    \item $X$ has a DC3 pair formed by infinite paths;
    \item the system $(X,\sigma)$ is distributionally chaotic of type 1; 
    \item $X$ has an uncountable DC1 set which is perfect and compact;
    \item $X$ contains a closed, shift invariant, uncountable subset $Y$ that is chaotic in the sense of Devaney.
\end{enumerate}
\end{teo}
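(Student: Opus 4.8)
The plan is to prove Theorem~\ref{teoequivalent} by assembling a cycle (and a few shortcuts) through the implications, using the propositions already established in the paper as the main engine. The key observation is that nearly every arrow is already present: Proposition~\ref{propthebest} gives $(1)\Rightarrow(8)$, $(1)\Rightarrow(9)$, and (taking the perfect compact DC1 set and recalling that DC1 pairs are scrambled pairs) $(1)\Rightarrow(3)$; Proposition~\ref{propcpg} gives $(7)\Rightarrow(1)$ (since a DC3 pair witnesses $\liminf\Phi<\limsup\Phi$ for some $\delta$, which is exactly the hypothesis of Proposition~\ref{propcpg}); Proposition~\ref{forDevaney} gives $(1)\Leftrightarrow(10)$ after one observes that the uncountable closed shift-invariant irreducible subspace $Y=\{c,c'\}^{\mathbb N}$ built there has a dense set of periodic points. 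So the skeleton is short.

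First I would record the trivial implications among the distributional and Li-Yorke notions. Every DC1 pair is a DC2 pair and every DC2 pair is a DC3 pair (stated in the paper right after Definition~\ref{defDCi}), so $(5)\Rightarrow(6)\Rightarrow(7)$. A DC1 pair is also a scrambled pair: if $\limsup_n\Phi(n,\delta,x,y)=1$ for all $\delta$ then $\liminf_n d(\sigma^n x,\sigma^n y)=0$, and if $\liminf_n\Phi(n,\delta_0,x,y)=0$ for some $\delta_0$ then $\limsup_n d(\sigma^n x,\sigma^n y)\ge\delta_0>0$; hence $(5)\Rightarrow(2)$, and likewise the uncountable perfect compact DC1 set of Proposition~\ref{propthebest} is an uncountable perfect compact scrambled set, giving $(9)\Rightarrow(3)$ and $(8)\Rightarrow(4)$. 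Also $(3)\Rightarrow(2)$ is immediate (pick any two distinct points of the set — they are infinite paths since the set is perfect, hence has no isolated finite-path points, though more simply one notes the perfect compact set constructed lies in $\{0,1\}^{\mathbb N}\subseteq\mathfrak p^\infty$), $(8)\Rightarrow(5)$ and $(9)\Rightarrow(5)$ by extracting a DC1 pair of infinite paths from the uncountable DC1 set, and $(4)\Rightarrow(2)$ after the same remark that the uncountable scrambled set witnessing Li-Yorke chaos may be taken inside $\mathfrak p^\infty$ (or one appeals to \cite{brunodaniel}).

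Then I would close the loop at the combinatorial condition $(1)$. For $(2)\Rightarrow(1)$: a scrambled pair $(x,y)$ of infinite paths has $\limsup_n d(\sigma^n x,\sigma^n y)>0$, so by Lemma~\ref{prop0inf} some edge $e$ occurs infinitely often in $x$ or in $y$; running the dichotomy argument of Proposition~\ref{propcpg} (the non-eventually-periodic case directly; the eventually periodic case via Proposition~\ref{comousar}, noting $\liminf_n d\ne\limsup_n d$ forces the other coordinate to be non-eventually-periodic with an infinitely-repeated edge) yields a vertex $v$ with $\#CP_{\mathcal G}(v)\ge2$. Combined with $(7)\Rightarrow(1)$ from Proposition~\ref{propcpg} and $(10)\Rightarrow(1)$ from Proposition~\ref{forDevaney}, and with $(1)\Rightarrow(8),(9),(10)$ as above, all ten statements become equivalent. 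I would lay this out as: $(1)\Rightarrow(9)\Rightarrow(8)\Rightarrow(5)\Rightarrow(6)\Rightarrow(7)\Rightarrow(1)$; $(1)\Rightarrow(3)\Rightarrow(2)\Rightarrow(1)$; $(8)\Rightarrow(4)\Rightarrow(2)$; $(1)\Leftrightarrow(10)$.

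The only step requiring genuine (if small) additional work is $(1)\Rightarrow(10)$, or more precisely the density-of-periodic-points clause of Devaney chaos for the subspace $Y=\{c,c'\}^{\mathbb N}$: I would note that eventually-periodic concatenations of $c$ and $c'$ are periodic points of $\sigma^{\,|\cdot|}$ that are dense in $Y$ in the metric $d_X$ (since agreement on a long initial block of $c$'s and $c'$'s forces closeness, by Proposition~\ref{convseq}(a)), and irreducibility/closedness/uncountability of $Y$ are exactly what Proposition~\ref{forDevaney} already supplies. The rest is bookkeeping, and the main obstacle is merely making sure each ``formed by infinite paths'' hypothesis is actually available where needed — which it is, because all the scrambled and DC1 sets we construct live in $\{0,1\}^{\mathbb N}\subseteq\mathfrak p^\infty$.
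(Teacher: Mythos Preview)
Your proposal is correct and follows essentially the same route as the paper: the cycle $(1)\Rightarrow(9)\Rightarrow(8)\Rightarrow(5)\Rightarrow(6)\Rightarrow(7)\Rightarrow(1)$ via Propositions~\ref{propthebest} and~\ref{propcpg}, together with $(1)\Leftrightarrow(10)$ via Proposition~\ref{forDevaney} plus the observation that $Y=\{c,c'\}^{\mathbb N}$ has dense periodic points, is exactly the paper's argument. The only difference is that the paper dispatches the Li--Yorke block $(1)\Leftrightarrow(2)\Leftrightarrow(3)\Leftrightarrow(4)$ by a single citation to \cite{brunodaniel}, whereas you recover $(1)\Rightarrow(3)$ and $(8)\Rightarrow(4)$ from the DC1 results (using that DC1 pairs are scrambled and that $X_{fin}$ is countable) and sketch $(2)\Rightarrow(1)$ directly; that sketch is fine in substance, but note that your appeal to Proposition~\ref{comousar} in the eventually-periodic case is to the distance analysis inside its \emph{proof} rather than to its statement (which concerns $\Phi$, not $d$), so either spell that out or, as you yourself allow, fall back on \cite{brunodaniel}.
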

\begin{proof}
 The equivalence between the statements 1. to 4. is proved in \cite{brunodaniel}. By Definition~\ref{defphi}, 5. implies 6. which implies 7.. By Proposition~\ref{propcpg}, 7 implies 1. By Proposition~\ref{propthebest}, 1. implies 9. Obviously, 9. implies 8. which implies 5. This gives the equivalence between statements 1. to 9.. Finally, from Proposition~\ref{forDevaney}, we get that 1. is equivalent to 10 (notice that the set $Y$ given in the first paragraph of the proof of Proposition~\ref{forDevaney} has dense set of periodic points).



\end{proof}

As we mentioned in the introduction, the study of chaos is intimately related to the concept of entropy. In \cite{oprocha} it is shown that chaos for cocyclic shift spaces over finite alphabets is equivalent to strictly positive entropy. Building from ideas from Salama and Gurevich, see \cite{Sal} and \cite{Gurevic}, we propose the following notion of entropy for ultragraph shift spaces.

\begin{defin}\label{entropy} Let $\mathcal{G}$ be an ultragraph and $X$ be the associated shift space. We define the entropy of $X$ as
$$h(X):= \displaystyle \sup_{v\in G^0} \lim \frac{1}{n} log (NP_n^v),$$ where 
$NP_n^v$ denotes the number of paths of length $n$ that start and end at $v$.\end{defin}

\begin{remark}
Notice that the entropy of a shift may be infinite, but this is not problematic with the notion of chaos. In fact, if an ultragraph shift space has strictly positive entropy then there exists a vertex such that $\lim \frac{1}{n} log (NP_n^v)>0$. This implies Condition~1. in Theorem~\ref{teoequivalent}. On the other hand, clearly if Condition~1. in Theorem~\ref{teoequivalent} holds then the entropy of $X$ is strictily positive. This way we obtain an equivalence between stricly positive entropy and chaos, what is analogous to the finite alphabet result (for cocyclic shifts).
\end{remark}

We finish the paper providing an example of an ultragraph whose associated ultragraph shift space has a DC1 pair but it does not have any uncountable DC1 set. We set up some necessary notation below. 

Let $\mathcal{G}$ be the graph with edges $\{e_{n}\}_{n \in \mathbb{N}}$ and vertices $\{v_{n}\}_{n \in \mathbb{N}}$, and such that $s(e_{n}) = v_{n}$ and $r(e_{n}) = v_{n+1}$ for all natural $n$. Notice that $G^0 = \{v_{1},v_{2},v_{3}, \ldots\}$. Let $\mathcal{K}:=\{k_{n}\}_{n \in \mathbb{N}}$ and $\mathcal{L}:=\{\ell_{n}\}_{n \in \mathbb{N}}$ be any two sequences of non zero natural numbers. We construct an ultragraph $\mathcal{G'}$ from the graph $\mathcal{G}$ and the sequences $\mathcal{K}:=\{k_{n}\}_{n \in \mathbb{N}}$ and $\mathcal{L}:=\{\ell_{n}\}_{n \in \mathbb{N}}$. 

For each natural $n>1$, define $\displaystyle{t_{n} := \sum_{i=1}^{n-1}(k_{i}+\ell_{i})}$, and define a subset $V_n$ of the vertices by:
\begin{equation}\label{eqEn}
V_{n}:=\left\{
\begin{array}
[c]{ll}%
\{v_{1},v_{2},\ldots,v_{k_{1}}\}, & \text{if }\mbox{$n=1$; }\\
\{v_{t_{n}+1},v_{t_{n}+2},\ldots, v_{t_{n}+k_{n}}\}, & \text{if }\mbox{$n > 1$.}\\
\end{array}
\right.  
\end{equation}

Finally let $\displaystyle{A := \bigcup_{i=1}^{\infty}V_{i}}$ and let $\mathcal{G'}$ be the ultragraph with edges $\displaystyle{\{e_{n}\}_{n\geq 0}}$, vertices $\{v_{n}\}_{n \geq 0}$, and such that $s(e_{n})= v_{n}$ for all $n\geq 0$, $r(e_{0})=A$, and $r(e_{n})=v_{n+1}$ for all $n\geq 1$. Note that  $\mathcal{G'}$ is the ultragraph resulting from the addition of the edge $e_{0}$ (and its source, $v_{0}$, and its range, the set $A$) to the graph $\mathcal{G}$. Below we illustrate $\mathcal{G'}$ by choosing $k_{n} = \ell_{n} = n$, for all natural $n$:

\begin{tikzpicture}[line cap=round,line join=round,>=triangle 45,x=1.0cm,y=1.0cm]
\clip(-7.8967088641901855,-1.588724279835378) rectangle (6.289958821545717,1.4690534979423664);
\draw [->,line width=0.8pt] (-4.,0.) -- (-3.,0.);
\draw [->,line width=0.8pt] (-3.,0.) -- (-2.,1.);
\draw [->,line width=0.8pt] (-2.,1.) -- (-1.,1.);
\draw [->,line width=0.8pt] (-1.,1.) -- (0.,0.);
\draw [->,line width=0.8pt] (0.,0.) -- (1.,0.);
\draw [->,line width=0.8pt] (1.,0.) -- (2.,0.);
\draw [->,line width=0.8pt] (2.,0.) -- (3.,1.);
\draw [->,line width=0.8pt] (3.,1.) -- (4.,1.);
\draw [->,line width=0.8pt] (4.,1.) -- (5.,1.);
\draw [->,line width=0.8pt] (-6.,0.) -- (-5.,1.);
\draw [->,line width=0.8pt] (-5.,1.) -- (-4.,0.);
\draw [line width=0.8pt] (-6.39876543209876,-0.40345679012345753)-- (-6.39876543209876,0.39654320987654);
\draw [line width=0.8pt] (-6.39876543209876,0.39654320987654)-- (5.574567901234581,0.3832098765432067);
\draw [line width=0.8pt] (5.574567901234581,-0.3901234567901243)-- (-6.39876543209876,-0.40345679012345753);
\draw [->,line width=0.8pt] (-7.612098765432095,0.) -- (-6.39876543209876,0.);
\draw (-7.8,-0.02) node[anchor=north west] {\tiny{$v_{0}$}};
\draw (-7.4,0.35) node[anchor=north west] {\tiny{$e_{0}$}};
\draw (-6.2,-0.02) node[anchor=north west] {\tiny{$v_{1}$}};
\draw (-5.95,0.75) node[anchor=north west] {\tiny{$e_{1}$}};
\draw (-5.2,1.35) node[anchor=north west] {\tiny{$v_{2}$}};
\draw (-4.6,0.75) node[anchor=north west] {\tiny{$e_{2}$}};
\draw (-4.2,-0.02) node[anchor=north west] {\tiny{$v_{3}$}};
\draw (-3.9,0.35) node[anchor=north west] {\tiny{$e_{3}$}};
\draw (-3.2,-0.02) node[anchor=north west] {\tiny{$v_{4}$}};
\draw (-2.95,0.75) node[anchor=north west] {\tiny{$e_{4}$}};
\draw (-2.2,1.35) node[anchor=north west] {\tiny{$v_{5}$}};
\draw (-1.85,1.05) node[anchor=north west] {\tiny{$e_{5}$}};
\draw (-1.2,1.35) node[anchor=north west] {\tiny{$v_{6}$}};
\draw (-0.6,0.75) node[anchor=north west] {\tiny{$e_{6}$}};
\draw (-0.2,-0.02) node[anchor=north west] {\tiny{$v_{7}$}};
\draw (0.1,0.35) node[anchor=north west] {\tiny{$e_{7}$}};
\draw (0.8,-0.02) node[anchor=north west] {\tiny{$v_{8}$}};
\draw (1.1,0.35) node[anchor=north west] {\tiny{$e_{8}$}};
\draw (1.8,-0.02) node[anchor=north west] {\tiny{$v_{9}$}};
\draw (2.05,0.75) node[anchor=north west] {\tiny{$e_{9}$}};
\draw (2.7,1.35) node[anchor=north west] {\tiny{$v_{10}$}};
\draw (3.1,1.05) node[anchor=north west] {\tiny{$e_{10}$}};
\draw (3.7,1.35) node[anchor=north west] {\tiny{$v_{11}$}};
\draw (4.1,1.05) node[anchor=north west] {\tiny{$e_{11}$}};
\draw (4.7,1.35) node[anchor=north west] {\tiny{$v_{12}$}};
\begin{scriptsize}
\draw [fill=black] (-4.,0.) circle (1.5pt);
\draw [fill=black] (-3.,0.) circle (1.5pt);
\draw [fill=black] (-2.,1.) circle (1.5pt);
\draw [fill=black] (-1.,1.) circle (1.5pt);
\draw [fill=black] (0.,0.) circle (1.5pt);
\draw [fill=black] (1.,0.) circle (1.5pt);
\draw [fill=black] (2.,0.) circle (1.5pt);
\draw [fill=black] (3.,1.) circle (1.5pt);
\draw [fill=black] (4.,1.) circle (1.5pt);
\draw [fill=black] (5.,1.) circle (1.5pt);
\draw [fill=black] (-6.,0.) circle (1.5pt);
\draw [fill=black] (-5.,1.) circle (1.5pt);
\draw [fill=black] (-7.612098765432095,0.) circle (1.5pt);
\draw [fill=black] (5.4,0.) circle (0.5pt);
\draw [fill=black] (5.702304458345153,0.) circle (0.5pt);
\draw [fill=black] (6.,0.) circle (0.5pt);
\draw [fill=black] (5.4,1.) circle (0.5pt);
\draw [fill=black] (5.71564,1.) circle (0.5pt);
\draw [fill=black] (6.,1.) circle (0.5pt);
\end{scriptsize}
\end{tikzpicture}

Based on the general construction of $\mathcal{G'}$ given above, but considering a more specific, and suitable, choice of the sequences $\mathcal{K}$ and $\mathcal{L}$, we describe in the next example an ultragraph shift space that presents a DC1 pair but it does not present any uncountable DC1 set.

\begin{exe}
An example of an ultragraph whose associated ultragraph shift space has a DC1 pair but it does not have any uncountable DC1 set.
\end{exe}
Let $\mathcal{G}=(\{v_{n,}\}, \{e_{n}\}, r, s)$ be the graph constructed above, 
let $\mathfrak{p} = \{p_{1},p_{2},p_{3},\ldots\}$ be a fixed enumeration of all ultrapaths, and let $\{\delta_{n}\}_{n \in \mathbb{N}}$ be any decreasing sequence of positive real numbers such that $\displaystyle{\lim_{n \rightarrow \infty}\delta_{n} = 0}$ and $\displaystyle{\delta_{1} < \dfrac{1}{2}}$. Notice that, for each natural $n$, there is a natural $N_{n}$ such that if $j \geq N_{n}$ then $m_{j} := \min\{k: \mbox{$e_{j}$ is an initial segment of $p_{k}$  }\}$ satisfies $\dfrac{1}{2^{m_{j}}}<\delta_{n}$. Next we define appropriate sequences of non zero natural numbers $\mathcal{K}:=\{k_{n}\}_{n \in \mathbb{N}}$ and $\mathcal{L}:=\{\ell_{n}\}_{n \in \mathbb{N}}$. 


Let $k_{1}$ be a natural number such that $\dfrac{k_{1}-N_{1}}{k_{1}}> 1 - \delta_{1}$. Next let $\ell_{1}$ be a natural number such that $\dfrac{k_{1}}{k_{1}+ \ell_{1}} < \delta_{1}$. Recursively we obtain two sequences of natural numbers, $\mathcal{K}=\{k_{n}\}_{n \in \mathbb{N}}$ and $\mathcal{L}=\{\ell_{n}\}_{n \in \mathbb{N}}$, such that $\displaystyle{\lim_{n \rightarrow \infty}k_{n}=\lim_{n \rightarrow \infty}\ell_{n} = \infty}$,
$$\dfrac{k_{n}-N_{n}}{\sum_{i=1}^{n}k_{i} + \sum_{i=1}^{n-1}\ell_{i}} > 1 - \delta_{n}, \text{\ and \ } \dfrac{\sum_{i=1}^{n}k_{i} + \sum_{i=1}^{n-1}\ell_{i}}{\sum_{i=1}^{n}k_{i} + \sum_{i=1}^{n}\ell_{i}} < \delta_{n},$$ for each natural $n$.

Let $A$ be the infinite set of vertices $\displaystyle{A = \bigcup_{i=1}^{\infty}V_{i}}$, where $V_{n}$ is given by equality (\ref{eqEn}).  Finally, as in the preparation before the example, let $\mathcal{G'}$ be the ultragraph with edges $\displaystyle{\{e_{n}\}_{n\geq 0}}$, vertices  $\{v_{n}\}_{n \geq 0}$, and such that $s(e_{n})= v_{n}$ for all $n\geq 0$, $r(e_{0})=A$, and $r(e_{n})=v_{n+1}$ for all $n\geq 1$. Let $\mathfrak{p'} = \{p'_{1}, p'_{2}, p'_{3}, \ldots\}$ be an enumeration of the ultrapaths of $\mathcal{G'}$ such that $p'_{1} = A$ and, if $p'_{j} = p_{k}$ then $j \geq k$. With this property satisfied, we have:
\begin{eqnarray*}
m'_{j} &:=& \min\{k: \mbox{$e_{j}$ is an initial segment of $p'_{k}$ }\}\\
       &\geq & \min\{k: \mbox{$e_{j}$ is an initial segment of $p_{k}$ }\}\\
       & = & m_{j}.
\end{eqnarray*}
Hence $j \geq N_{n}$ implies $\displaystyle{\dfrac{1}{2^{m'_{j}}}\leq \dfrac{1}{2^{m_{j}}}<\delta_{n}}$ for all natural $n$.

Denote by $X$ and $X'$ the ultragraph shift spaces associated to $\mathcal{G}$ and $\mathcal{G'}$, respectively. Then $$X' = X  \bigcup \{\mbox{infinite paths that starts at $e_{0}$}\} \bigcup \{(e_{0},A)\} \bigcup \{A\},$$ as $A$ is the only minimal infinite emitter in $X'$. Notice that $X'$ is a countable space, and hence it is not DC$i$ chaotic, for $i=1,2,3$ neither Li-Yorke chaotic. To finish we prove next that the pair $(x,A)$, where $x = e_{1}e_{2}e_{3}\ldots$ is a DC1 pair.

Define $K_{1} := k_{1}$, $\displaystyle{K_{n} := \sum_{i=1}^{n}k_{i} + \sum_{i=1}^{n-1}\ell_{i}}$ for $n > 1$, and $\displaystyle{L_{n}:= \sum_{i=1}^{n}k_{i} + \sum_{i=1}^{n}\ell_{i} = K_{n}+\ell_{n}}$ for all natural $n$. Fix $\delta > 0$. Let $N$ be a natural such that $n \geq N$ implies $\delta_{n} < \delta$. Notice that $d(\sigma^{j}(x),A) < \delta_{n}$ for all natural $j$  such that $L_{n-1}+1 \leq j \leq K_{n} - N_{n}$. Then, we infer that $\#\{j: 0 \leq j \leq K_{n} \mbox{\ and \ } d(\sigma^{j}(x),A) < \delta_{n}\} \geq k_{n} - N_{n}$. So, we have these inequalities:
\begin{eqnarray*}
\Phi(K_{n},\delta,x,A) & \geq & \Phi(K_{n},\delta_{n},x,A)\\
                       & \geq & \dfrac{k_{n}-N_{n}}{K_{n}}\\
                       & > & 1 - \delta_{n}.
\end{eqnarray*}
Therefore we must have $\displaystyle{\lim_{n \rightarrow \infty} \Phi(K_{n},\delta,x,A) = 1}$ for all $\delta > 0$. 

Finally, fix a natural $n$. As we have assumed that $\delta_{1} < \dfrac{1}{2}$ and $A$ is the first ultrapath in $\mathfrak{p'}$, (in other words, $A = p'_{1}$), then, if $K_{n}\leq j \leq L_{n}-1$ we have $d(\sigma^{j}(x),A) = \dfrac{1}{2} > \delta_{1}$. For this reason, we have $\#\{j:0 \leq j \leq L_{n} \mbox{\ and \ } d(\sigma^{j}(x),A) \geq \delta_{1}\} \geq L_{n}-K_{n}$. Then: $\#\{j:0 \leq j \leq L_{n} \mbox{\ and \ } d(\sigma^{j}(x),A) < \delta_{1}\} \leq K_{n}$. In conclusion, we have that $\Phi(L_{n},\delta_{1},x,A) \leq \dfrac{K_{n}}{L_{n}} < \delta_{n}$. Then $\displaystyle{\lim_{n \rightarrow \infty}\Phi(L_{n},\delta_{1},x,A) = 0}$ and $(x,A)$ is a DC1 pair as we wanted.

\section*{Acknowledgments}

\noindent D. Gon\c{c}alves was partially supported by Conselho Nacional de Desenvolvimento Cient\'{\i}fico e Tecnol\'{o}gico -
CNPq.

\noindent B. B. Uggioni was partially supported by Conselho Nacional de Desenvolvimento Cient\'{\i}fico e Tecnol\'{o}gico -
CNPq.

\end{document}